%
%
%
\documentclass[11pt]{article}
\usepackage[colorlinks=true, urlcolor=blue, linkcolor=red]{hyperref}
\usepackage{epsf}
\usepackage{amsbsy,amsmath}
\usepackage{mathtools}
\usepackage{mathrsfs}
\usepackage{amsfonts}
\usepackage{amssymb}
\usepackage{enumitem}
\usepackage{eucal}
\usepackage{enumitem}
\usepackage{graphics,mathrsfs}
\usepackage{amsthm}
\usepackage{secdot}
\usepackage{xcolor}
\usepackage{color,soul}
\oddsidemargin .15in 
\evensidemargin .25in 
\textwidth 165mm
 \topmargin .0001in 
\textheight 210mm 
\parskip = 0.8pt

\newtheorem{theorem}{Theorem}[section]

\theoremstyle{definition}
\newtheorem{definition}[theorem]{Definition}

\theoremstyle{remark}
\theoremstyle{claim}
\newtheorem{remark}[theorem]{Remark}
\numberwithin{equation}{section}

\numberwithin{equation}{section}
\newsavebox{\savepar}

\pagestyle{myheadings}
\begin{document}
	
\title{ \sc  A multiphase eigenvalue problem\\ 
             on a stratified Lie group}		
	\author{\sc Debajyoti Choudhuri$^{a,}$, 
	Leandro S. Tavares$^b$, 
	Du\v{s}an D. Repov\v{s}$^{c,}$\footnote{Corresponding author: dusan.repovs@guest.arnes.si}\\[0.5cm]
	\small{$^{a}$School of Basic Sciences, Indian Institute of Technology Bhubaneswar, Khordha, 752050, Odisha, India.}\\
	\small{{\it Email: dchoudhuri@iitbbs.ac.in}}\\
	\small{$^b$Center of Sciences and Technology, Federal University of Cariri,   Juazeiro do Norte, CE, 63048-080, Brazil.}\\
	\small{{\it Email: leandro.tavares@ufca.edu.br}}\\
\small{$^c$Faculty of Education and Faculty of Mathematics and Physics, University of Ljubljana,}\\
\small{\& Institute of Mathematics, Physics and Mechanics, Ljubljana, 1000, Slovenia.}\\
\small{{\it Email: dusan.repovs@guest.arnes.si}}}

\date{}

\maketitle	
 
\begin{abstract}
We consider a multiphase spectral problem on a stratified Lie group. We prove the existence of an eigenfunction of $(2,q)$-eigenvalue problem on a bounded domain. Furthermore, we also establish a Pohozaev-like identity corresponding to the problem on the Heisenberg group.
\begin{flushleft}
{\it Keywords}:~Multiphase spectral problem, Stratified Lie group, Heisenberg group, left invariant vector field, $(2,q)$-eigenvalue problem.
\end{flushleft}
\begin{flushleft}
{\it Math. Subj. Classif. (2020)}:~35J35, 35J60.
\end{flushleft}
\end{abstract}
\section{Introduction}
In this paper, we shall study the following problem
\begin{equation}\label{main}
	\left\{ \begin{array}{ll}-\mathcal{L} u+(-\Delta)^su=\lambda\|u\|_q^{2-q}|u|^{q-2}u~~~\text{on}~\Omega,\\
u=0~~~\text{on}~\mathbb{G}\setminus\Omega,
\end{array} 
	\right.
	\end{equation}
where $\Omega\subset\mathbb{G}$ is a bounded subdomain of a stratified Lie group $\mathbb{G}$. We shall  further assume the following condition on the exponent $q$
$$\text{(C)}:~~~2<Q,~1<q<2^*,$$
where $2^*:=\frac{2Q}{Q-2}$
and
$Q$ is the homogeneous dimension of the group $\mathbb{G}$. This is a new direction of studying the multiphase eigenvalue problem because the problem is
considered on a stratified Lie group. 

The interest towards the eigenvalue problems like the one in problem \eqref{main} is not only restricted to within the mathematical community but is also of interest to  physicists since it has a relation with the spectral optimization theory, bifurcation theory, fluid and quantum mechanics (see L\^{e}  \cite{Le}, Lindqvist  \cite{peter1}). Interested readers  may note that the study of elliptic PDEs involving the $p$-Laplacian operator is of interest in the theory of non-Newtonian fluids,
 both for the case $p \geq 2$ (dilatant fluids) and the case $1<p<2$ (pseudo-plastic fluids), see Astarita-Marrucci  \cite{4}. It is also of geometrical interest for $p \geq 2$, some of which is discussed in Uhlenbeck  \cite{32}. 

As far as the nonlocal elliptic problems are concerned, we refer to Zhao et al. 
\cite{zhao2} 
who considered a nonlocal elliptic problem driven by a nonlinearity, obeying certain conditions. Saoudi et al.
\cite{saoudi1}
proved the existence, multiplicity and regularity of solutions of a nonlocal elliptic PDE, driven by a singular and a power nonlinearity. 
The reader may also check 
Bouabdallah et al. \cite{bouabdallah}
and 
Zhao et al. \cite{zhao2},
to understand the trends in research on nonlocal elliptic PDEs, driven by nonlinearities of various types, which were naturally motivated by the literature in the local case.  For the latter,  we point out for example, Zeddini \cite{Z}.
However, since our paper is purely of mathematical interest, we further refer the reader to Section \ref{s2} for the work due to Dipierro-Valdinoci \cite{SD-EV-0}, to learn more about the theory of phase transitions and the associated mathematics involved. 

 Interesting contributions related to \eqref{main} can be found in the literature, see e.g.,  Alves-Covei  \cite{AC}, Corr\^{e}a et al.  \cite{CFL}.  For example, in  Alves and Covei  \cite{AC}, the sub-supersolution method  was  applied to establish
  the existence of solutions for 
\begin{equation}\label{alves-covei-eq}
	\left\{ 
	\begin{array}{rcl}
		-a\left( \int_{\Omega }u\right) \Delta u & = & h_{1}(x,u)f\left(
		\int_{\Omega }|u|^{p}\right) +h_{2}(x,u)g\left( \int_{\Omega }|u|^{r}\right)
		\;\;\mbox{on}\;\;\Lambda , \\ 
		u & = & 0\;\;\mbox{on}\;\;\partial \Lambda ,%
	\end{array}%
	\right. 
\end{equation}
where $\Lambda \subset \mathbb{R}^n$ is a bounded smooth domain, and $a,f,g,h_{i}$ ($i=1,2$) are given functions with sufficient regularity.  In the case of the $p-$Laplacian, Corr\^{e}a et al.  \cite{CFL}  combined the sub-supersolutions method with a classical theorem due to  Rabinowitz  \cite{rabinowitz}. As a result, they were able to prove  the existence of solutions for the  quasilinear problem 
$$
\left\{ 
\begin{array}{rcl}
	-\Delta _{p}u & = & \|u\|_{L^{q}}^{\alpha (x)}\;\;\mbox{on}\;\;\Omega , \\ 
	u & = & 0\;\;\mbox{on}\;\;\partial \Omega ,%
\end{array}%
\right. \eqno{(P)}
$$%
where $\alpha $ is a nonnegative function defined on $\bar{\Omega }.$ The works of Arora-R\u{a}dulescu  \cite{radu1}, Bahrouni-R\u{a}dulescu  \cite{radu2}, Garain et al.  \cite{PG2}, Gou-R\u{a}dulescu  \cite{radu3}, Razani-Behboudi \cite{razani} and Zhang-R\u{a}dulescu  \cite{radu4}, where the multiphase problems were studied, are also of interest here. 

We now state the first main results of the paper:

\begin{theorem}\label{main_result_3}
Let $0<s<1$ and $1<q<2^*$. Assume that $\lambda>0$ and 
 $u\in X\setminus\{0\}$ is an eigenvalue and the corresponding eigenfunction. Then $u$ is bounded. Furthermore, if $u$ is nonnegative on $\Omega$, then $u>0$ on $\Omega$. Moreover, for every relatively compact subset  $\omega \subset \Omega,$ there exists a positive constant $c(\omega)$ such that $u\geq c>0$ on $\omega$.
\end{theorem}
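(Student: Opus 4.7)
The plan is to establish the three conclusions in sequence: first an $L^\infty$ bound on $u$ via a Moser-type iteration adapted to the mixed local/nonlocal setting on the stratified group, then strict positivity of nonnegative eigenfunctions via a strong minimum principle for $-\mathcal{L}+(-\Delta)^s$, and finally the quantitative bound on compacts as a corollary (or, more directly, via a weak Harnack inequality).

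For the boundedness, I would test the weak formulation of \eqref{main} with $\varphi=u\,u_M^{2\beta}$, where $u_M=\min(|u|,M)$ and $\beta\ge 0$. The sub-Laplacian energy produces a nonnegative contribution controlling $\||\nabla_{\mathbb{G}}(u\,u_M^{\beta})|\|_{L^2}^2$, and the Gagliardo seminorm associated to $(-\Delta)^s$ admits an analogous algebraic lower bound after truncation (using the standard elementary inequality for the product $(a-b)(a\,u_M(a)^{2\beta}-b\,u_M(b)^{2\beta})$). Applying the Folland--Stein Sobolev embedding into $L^{2^*}(\Omega)$ with $2^*=2Q/(Q-2)$ converts this into an $L^{2^*(\beta+1)}$--$L^{2(\beta+1)}$ iteration. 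The right-hand side is handled via H\"older and Young inequalities; the subcritical condition $1<q<2^*$ is precisely what allows the arising terms to be absorbed into the Sobolev term. Letting $M\to\infty$ and iterating $\beta$ along a geometric sequence yields $\|u\|_\infty\le C(\lambda,\|u\|_q,Q,s,q)$.

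For the strict positivity, if $u\ge 0$ in $\Omega$ and $u\not\equiv 0$, then the right-hand side of \eqref{main} is nonnegative, so $u$ is a nontrivial nonnegative weak supersolution of $-\mathcal{L}v+(-\Delta)^s v=0$ in $\Omega$. A strong minimum principle for this mixed operator --- the sub-Laplacian part in the spirit of Bony's maximum principle for H\"ormander sums of squares, the nonlocal part being strictly positivity-improving --- forces $u>0$ throughout $\Omega$. The quantitative lower bound then follows immediately: since $u$ is bounded and (by underlying regularity for the mixed operator) lower semicontinuous, $u$ attains its infimum on $\overline{\omega}$ for every $\omega\subset\subset\Omega$, and that infimum is strictly positive by the previous step; alternatively, a weak Harnack inequality for $-\mathcal{L}+(-\Delta)^s$ delivers the estimate directly.

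The principal obstacle is the geometric mismatch between the sub-Riemannian operator $\mathcal{L}$, which is degenerate in the non-horizontal directions, and the Euclidean fractional Laplacian $(-\Delta)^s$. In the Moser step one must check that the two Dirichlet energies combine to control a single Sobolev norm with exponent $2^*=2Q/(Q-2)$, and in the positivity step one must invoke a strong minimum principle that accommodates the degeneracy of $\mathcal{L}$. This second ingredient is the most delicate and would be quoted from the existing literature on maximum principles for sub-Laplacians, together with the standard nonlocal improvement, rather than reproved from scratch.
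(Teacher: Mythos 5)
Your proposal is correct, but for the $L^\infty$ bound it takes a genuinely different route from the paper. The paper uses the De Giorgi--Stampacchia level-set method: after normalizing $\|u\|_q=1$, it tests the weak formulation with $(u-k)_+$ on the superlevel set $\Omega_k=\{u>k\}$, observes that the nonlocal quadratic form is then nonnegative (since $(u-k)_+$ is a nondecreasing truncation of $u$) and can simply be discarded, splits into the cases $q\le 2$ and $q>2$, derives a decay inequality of the form $\int_{\Omega_k}(u-k)\,dx \le C\,k\,|\Omega_k|^{1+\alpha}$ with $\alpha>0$ (this is where $q<2^*$ enters), and then concludes by the classical iteration lemma of Ladyzhenskaya--Ural'tseva. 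You instead run a Moser iteration with $\varphi=u\,u_M^{2\beta}$. Both are standard and both would work here, but the De Giorgi route is marginally cleaner because dropping the nonlocal term is immediate; in your scheme you invoke the elementary algebraic lower bound on the fractional form, although you could equally well observe that $t\mapsto t\min(|t|,M)^{2\beta}$ is nondecreasing and simply discard the nonlocal contribution, matching the paper's economy. The trade-off is that Moser iteration tends to give a more explicit dependence of $\|u\|_\infty$ on $\lambda$ and $\|u\|_q$, whereas the level-set route is shorter once the Ladyzhenskaya--Ural'tseva lemma is quoted.

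For the strict positivity and the quantitative lower bound on compacts, you and the paper follow essentially the same path: observe that for $\lambda>0$ a nonnegative, nontrivial eigenfunction is a weak supersolution of the homogeneous mixed equation $-\mathcal{L}v+(-\Delta)^s v=0$, and then invoke the weak Harnack inequality for this mixed local/nonlocal operator. The paper cites Garain--Kinnunen \cite[Theorem 8.4]{PG0} for this (noting that the proof is insensitive to replacing the Euclidean gradient by the horizontal gradient on a stratified group), and you correctly identify the same ingredient as the crucial one, whether packaged as a strong minimum principle or as a weak Harnack estimate. The paper's write-up of this step is somewhat more convoluted than yours (it sets up a comparison of two functions and tests the difference with a negative part), but the substance is the same: the quantitative lower bound on any $\omega\subset\subset\Omega$ follows either from the weak Harnack directly or, as you note, from lower semicontinuity of the supersolution representative plus strict positivity.
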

 Our second main result is a Pohozaev-like identity for the Brezis-Nirenberg problem, albeit on a Heisenberg group $\mathbb{H}^n,$  which is a particular type of stratified Lie group.

\begin{theorem}\label{th2}
The following Pohozaev-like identity corresponding to \eqref{main} holds
\begin{align}\label{pohozaev_identity_2'}
\begin{split}
\frac{Q}{2}\int_{\Omega}G(u)dx-\left(\frac{Q-2}{2}\right)\int_{\Omega}|\nabla_{\mathbb{H}^n}u|^2dx&-\left(\frac{Q-2s}{2}\right)\int_{\Omega}|(-\Delta)^su|^2dx\\
=&\frac{1}{2}\int_{\partial\Omega}|\nabla_{\mathbb{H}^n}u|^2\langle Z,\hat{n}\rangle dS.
\end{split}
\end{align} 
\end{theorem}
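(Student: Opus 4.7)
My plan is to establish \eqref{pohozaev_identity_2'} by the classical Pohozaev multiplier method, adapted to the Heisenberg setting. Let $Z$ denote the infinitesimal generator of the anisotropic dilations $\delta_{r}(x,y,t)=(rx,ry,r^{2}t)$ on $\mathbb{H}^{n}$, explicitly $Z=\sum_{j=1}^{n}(x_{j}\partial_{x_{j}}+y_{j}\partial_{y_{j}})+2t\,\partial_{t}$. Three structural facts will drive the argument: first, $\operatorname{div} Z=Q=2n+2$; second, $\nabla_{\mathbb{H}^{n}}$ is $\delta_{r}$-homogeneous of degree one and $\mathcal{L}$ of degree two, yielding $[\mathcal{L},Z]=2\mathcal{L}$ together with the pointwise identity $Z\bigl(|\nabla_{\mathbb{H}^{n}}u|^{2}\bigr)=2\,\nabla_{\mathbb{H}^{n}}u\cdot\nabla_{\mathbb{H}^{n}}(Zu)-2\,|\nabla_{\mathbb{H}^{n}}u|^{2}$; third, $(-\Delta)^{s}$ is $\delta_{r}$-homogeneous of degree $2s$, which gives $[(-\Delta)^{s},Z]=2s\,(-\Delta)^{s}$.

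The first step is to test \eqref{main} against the multiplier $Zu$ and integrate over $\Omega$:
\[
\int_{\Omega}(-\mathcal{L}u)(Zu)\,dx+\int_{\Omega}\bigl((-\Delta)^{s}u\bigr)(Zu)\,dx=\lambda\|u\|_{q}^{2-q}\int_{\Omega}|u|^{q-2}u\,Zu\,dx.
\]
The right-hand side rewrites as $\tfrac{\lambda}{q}\|u\|_{q}^{2-q}\int_{\Omega}Z\bigl(|u|^{q}\bigr)\,dx$; applying the divergence theorem together with $\operatorname{div} Z=Q$ and the vanishing of $u$ outside $\Omega$ converts this into $-\tfrac{Q\lambda}{q}\|u\|_{q}^{2}=-\tfrac{Q}{2}\int_{\Omega}G(u)\,dx$, where $G(u):=\tfrac{2\lambda}{q}\|u\|_{q}^{2-q}|u|^{q}$ is an antiderivative of twice the nonlinearity appearing in \eqref{main}. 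This term will migrate to the left-hand side of the target identity with the expected sign.

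For the sub-Laplacian contribution, integration by parts in the horizontal gradient, followed by the pointwise identity in the second structural fact and a further application of the divergence theorem to $\int_{\Omega}Z(|\nabla_{\mathbb{H}^{n}}u|^{2})\,dx$, yields---after using $u=0$ on $\partial\Omega$ to identify $\nabla_{\mathbb{H}^{n}}u$ with its normal component there---
\[
\int_{\Omega}(-\mathcal{L}u)(Zu)\,dx=-\frac{Q-2}{2}\int_{\Omega}|\nabla_{\mathbb{H}^{n}}u|^{2}\,dx-\frac{1}{2}\int_{\partial\Omega}|\nabla_{\mathbb{H}^{n}}u|^{2}\langle Z,\hat{n}\rangle\,dS.
\]
A parallel computation for the nonlocal term, using $[(-\Delta)^{s},Z]=2s\,(-\Delta)^{s}$ and interpreting $|(-\Delta)^{s}u|^{2}$ through the quadratic pairing $\int u\,(-\Delta)^{s}u\,dx$, is designed to give $-\tfrac{Q-2s}{2}\int_{\Omega}|(-\Delta)^{s}u|^{2}\,dx$ with no leftover surface contribution, thanks to the exterior condition $u\equiv 0$ on $\mathbb{G}\setminus\Omega$. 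Substituting the three evaluated pieces into the test identity and transposing signs produces \eqref{pohozaev_identity_2'}.

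The main obstacle I anticipate is the rigorous treatment of the nonlocal term: since $(-\Delta)^{s}$ is not a differential operator, the ``integration by parts against $Zu$'' must be justified either through its singular-integral representation---exploiting the $\delta_{r}$-homogeneity of the kernel to establish the commutator identity distributionally---or through a Caffarelli--Silvestre-type harmonic extension compatible with the stratified structure. In either route, one must verify the decay and trace conditions that legitimise the commutator identity and confirm that no extraneous boundary contribution survives from the nonlocal interaction between $\Omega$ and $\mathbb{G}\setminus\Omega$.
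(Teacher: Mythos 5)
Your proposal follows essentially the same route as the paper: test the equation against the dilation generator $Z$, use $\operatorname{div}Z=Q$ to convert $\int_{\Omega} g(u)\,Zu\,dx$ into a multiple of $\int_{\Omega}G(u)\,dx$, and exploit the dilation-homogeneity of $\mathcal{L}$ (degree $2$) and $(-\Delta)^{s}$ (degree $2s$) for the two operator contributions. The paper outsources the sub-Laplacian piece to Loiudice's Theorem~4.1 and the fractional piece to its own Theorem~\ref{identity1}, whereas you derive the same pieces more explicitly from the pointwise identity for $Z\bigl(|\nabla_{\mathbb{H}^{n}}u|^{2}\bigr)$ and the commutator relation $[(-\Delta)^{s},Z]=2s\,(-\Delta)^{s}$---which is exactly the content those cited results encode---so this is a more self-contained rendering of the same argument rather than a genuinely different route.
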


We complete the introduction by describing the structure of the paper. 
In Section \ref{s2}, we review the fundamentals of stratified Lie groups. We also prove an embedding result in the Lie group setup.
In Section \ref{s22}, we introduce two key operators $A$ and $B,$ which are necessary for the proof of the first main result.
In Section \ref{s3}, we prove the first main result (Theorem \ref{main_result_3}).
In Section \ref{s4}, we establish the second main result (Theorem \ref{th2}). For all fundamental material used in this paper we refer the reader to the comprehensive monograph by Papageorgiou et al.  \cite{PRR}.

\section{Stratified Lie groups}\label{s2}

A quick sneak into the basics of stratified Lie groups may be useful to the reader  (see Choudhuri-Repov\v{s}  \cite{CR1}, Choudhuri et al. \cite{chou1}, Folland et al. \cite{GBF1}, Ghosh et al.  \cite{GVM1}, Montgomery \cite{montgomery1}, and the references therein). This is why we now recall some fundamental definitions from this topics.
\begin{definition}\label{HLG}
	A Lie group $\mathbb{G}$ on $\mathbb{R}^n$, is said to be {\it homogeneous}, if for every $\delta>0$ there exists an automorphism $T_{\delta}:\mathbb{G}\to\mathbb{G}$ defined by 
	$$T_{\delta}(x)=(\delta^{r_1}x_1,\delta^{r_2}x_2,\cdots,\delta^{r_n}x_n),
	\ 
	\hbox{ for every}
	 \ 
	r_i>0,
	 \
	 i=1,2,\cdots,n.
	$$
	 The map $T_{\delta}$ is called a {\it dilation} on $\mathbb{G}$. Here, $x=(x_1,x_2,\cdots,x_n)$.
\end{definition}

The number $n$ represents the {\it topological dimension} of $\mathbb{G}$, whereas the number $M=r_1+r_2+\cdots+r_n$ represents the {\it homogeneous dimension} of the homogeneous Lie group $\mathbb{G}$. We shall denote Haar measure by the symbol $dx$, which is the standard Lebesgue measure on $\mathbb{R}^n$. The following is the definition of a stratified Lie group.
\begin{definition}\label{SLG_defn}
	A homogeneous Lie group $\mathbb{G}=(\mathbb{R}^n,\cdot)$ is called a {\it stratified Lie group} (or a {\it homogeneous Carnot group}) if the following two conditions hold:
	\begin{enumerate}[label=(\roman*)]
		\item The decomposition $\mathbb{R}^n=\mathbb{R}^{n_1}\times\mathbb{R}^{n_2}\times\cdots\times\mathbb{R}^{n_k}$ holds for some natural numbers $n_1,n_2,\cdots,n_k$ such that $n_1+n_2+\cdots+n_k=n$. Furthermore, for every $\delta>0$ there exists a dilation of the form $T_{\delta}(x)=(\delta^1 x^{(1)},\delta^2 x^{(2)},\cdots,\delta^k x^{(k)})$ which is an automorphism of the group $\mathbb{G}$. Here, $x^{(i)}\in\mathbb{R}^{n_i}$ for every $i=1,2,\cdots,k$.
		\item Let $n_1$ be the same as in the above decomposition of $\mathbb{R}^n$, and let $\mathcal{L}_1,\mathcal{L}_2,\cdots,\mathcal{L}_{n_1}$ be the left invariant vector fields on $\mathbb{G}$ such that $\mathcal{L}_i(0)=\frac{\partial}{\partial x_i}|_{0}$ for $i=1,2,\cdots,n_1$. Then the H\"{o}rmander rank condition holds for every $x\in\mathbb{R}^n$, i.e., rank(Lie$\{\mathcal{L}_1,\mathcal{L}_2,\cdots,\mathcal{L}_{n_1}\})=n$. Roughly speaking, the Lie algebra corresponding to the Lie group $\mathbb{G}$ is spanned by the iterated commutators of $\mathcal{L}_1,\mathcal{L}_2,\cdots,\mathcal{L}_{n_1}$.
	\end{enumerate}
\end{definition}

Together, the structure $(\mathbb{R}^n,\cdot,T_{\delta})$ is called a {\it stratified Lie group}. The number $k$ is called the {\it step of the homogeneous Carnot group}. In the case of a stratified Lie group, the homogeneous dimension becomes $$Q=\sum_{i=1}^{k}in_i$$ Let $\mathfrak{G}$ be a Lie algebra associated to a Lie group $\mathbb{G}$. A {\it stratification} of $\mathfrak{G}$ with step $k$ is a direct sum decomposition
$$\mathfrak{G}=V_1\oplus V_2\oplus\cdots\oplus V_k$$
of $\mathfrak{G}$ with the property that $V_k\neq\{0\}$ and $[V_2,V_j]=V_{j+1}$, for every $j=1,2,\cdots,k$, where we set $V_{k+1}=\{0\}$. Here $[V,W]:=\text{span}\{[X,Y]:X\in V, Y\in W\}$.

Throughout the paper, we set $n=n_1$ in Definition \ref{SLG_defn}. The operators $\mathcal{L}$
and  $(-\Delta)^s$ are the Laplacian and the fractional Laplacian, respectively that are defined as follows:
\begin{align}\label{operators}
\begin{split}
\mathcal{L}v&:=\sum_{i=1}^{n_1}X_i^2v,\\
(-\Delta)^sv&:=P.V.\int_{\mathbb{G}}\frac{v(x)-v(y)}{|y^{-1}\cdot x|^{Q+2s}}dy,
\end{split}
\end{align}
where for every $i$ the vector field $X_i$ are left invariant.  
The subgradient is the $n_1$-dimensional vector given by 
$$\nabla_{\mathbb{G}} v(z)=(X_1v,X_2v,\cdots,X_{n_1}v).$$
The operators $\mathcal{L}$ and  $\nabla_{\mathbb{G}}$ are left invariant differential operators.
For each real $\delta$, the naturally associated dilation with a stratified Lie group is given by 
$$T_{\delta}(z)=(\delta^1 z^{(1)},\delta^2 z^{(2)},\cdots,\delta^kz^{(k)}),
\
\hbox{for every}
\
z\in\mathbb{R}^{n_1}\times\mathbb{R}^{n_2}\times\cdots\times\mathbb{R}^{n_k}.
$$
The fractional Sobolev space $W^{s,2}(\Omega)$, $0<s<1$, is defined by 
$$W^{s,2}(\Omega):=\left\{u\in L^2(\Omega):\frac{|v(x)-v(y)|}{|y^{-1}\cdot  x|^{\frac{Q}{2}+s}}\in L^2(\Omega\times\Omega)\right\},$$
and is equipped with the norm 
$$\|v\|_{W^{s,2}}(\Omega):=\left(\int_{\Omega}|u|^2dx+\iint_{\Omega\times\Omega}\frac{|u(x)-u(y)|^2}{|y^{-1}\cdot  x|^{Q+2s}}dxdy\right)^{1/2}.$$
However, 
in order
to study the mixed problem we shall consider the space
$$X:=\{u\in W^{s,2}(\mathbb{G}):u|_{\Omega}\in W_0^{1,2}(\Omega),~u=0~\text{a.e. on}~\mathbb{G}\setminus\Omega\},$$
endowed with the norm
$$\|u\|:=\|\nabla_{\mathbb{G}} v\|_{L^2(\Omega)}+\left\|\frac{v(x)-v(y)}{|y^{-1}\cdot  x|^{\frac{Q+2s}{2}}}\right\|_{L^{2}(\Omega\times\Omega)},$$
which is more appropriate for study.

The continuous and compact embedding theorems also hold (see   Has\l asz-Koskela  \cite[Theorem 8.1]{HK}). We can now state the result collectively as follows.

\begin{theorem}\label{sobolev_embedding}
	Let $\Omega\subset\mathbb{G}$ be a bounded domain and $1\leq p <Q$. Then $W_0^{1,p}(\Omega)$ is continuously embedded
	in $L^q(\Omega)$ for every $1 \leq q \leq p^*:=\frac{Qp}{Q-p}$. Moreover, the embedding is compact for every $1 \leq q < p^*$.
\end{theorem}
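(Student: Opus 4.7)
The plan is to follow the general strategy for Sobolev spaces on doubling metric measure spaces that support a Poincare inequality, specialized to the stratified Lie group setting as in Haslasz-Koskela. The key observation is that $(\mathbb{G}, d_{CC}, dx)$, where $d_{CC}$ is the Carnot-Caratheodory metric generated by the horizontal vector fields $X_1, \ldots, X_{n_1}$ and $dx$ is the Haar measure, is an Ahlfors $Q$-regular metric measure space: every ball $B(x,r)$ satisfies $|B(x,r)| \asymp r^Q$. This scaling, enforced by the dilation $T_\delta$ and the homogeneity of $\nabla_{\mathbb{G}}$ under dilations, is what forces the critical exponent to be $p^* = Qp/(Q-p)$ rather than $np/(n-p)$.

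For the continuous embedding in the subcritical regime $1 \leq p < Q$, I would first establish the global Sobolev inequality
\begin{equation*}
\|u\|_{L^{p^*}(\mathbb{G})} \leq C \|\nabla_{\mathbb{G}} u\|_{L^{p}(\mathbb{G})}, \qquad u \in C_c^{\infty}(\mathbb{G}),
\end{equation*}
by controlling $u(x)$ pointwise by a horizontal Riesz potential of $|\nabla_{\mathbb{G}} u|$ along subunit paths and invoking a Hardy-Littlewood-Sobolev estimate calibrated to the homogeneous dimension $Q$. Extending by density of $C_c^{\infty}(\Omega)$ in $W_0^{1,p}(\Omega)$ and using that $\Omega$ has finite Haar measure yields the continuous embedding into $L^q(\Omega)$ for all $1 \leq q \leq p^*$, with Holder's inequality bridging the intermediate exponents. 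For the compact embedding when $q < p^*$, I would run a Rellich-Kondrachov argument: given a bounded sequence $(u_k) \subset W_0^{1,p}(\Omega)$, regularize via group convolution with a horizontal mollifier, use the $(1,p)$-Poincare inequality on Carnot-Caratheodory balls together with the doubling property to obtain equicontinuity of the mollified sequence, apply Arzela-Ascoli to extract $L^p$-convergence of a subsequence, and close by interpolating between strong $L^1$-convergence and weak $L^{p^*}$-boundedness to reach strong $L^q$-convergence for every $q < p^*$.

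The main obstacle is the subelliptic Poincare inequality $\int_B |u - u_B|^p\, dx \leq C r^p \int_{\sigma B} |\nabla_{\mathbb{G}} u|^p\, dx$ on $d_{CC}$-balls. Since horizontal curves are typically much longer than Euclidean straight lines, this is precisely where the Hormander rank condition intervenes, guaranteeing that $d_{CC}$ is finite on $\mathbb{G}\times\mathbb{G}$ and locally comparable to any homogeneous quasi-norm. Once this Poincare inequality is in hand, the dilation invariance of $\mathbb{G}$ and the doubling property of $dx$ reduce the whole argument to the general metric-measure scheme of Haslasz-Koskela, delivering both the continuous embedding up to $p^*$ and compactness for $q < p^*$ as claimed.
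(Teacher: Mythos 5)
Your sketch is correct and follows essentially the same route as the paper, which does not prove this theorem independently but simply cites Haj\l asz--Koskela \cite[Theorem 8.1]{HK}, whose metric-measure framework (doubling measure plus a $(1,p)$-Poincar\'{e} inequality) is exactly what you have laid out. The ingredients you identify---Ahlfors $Q$-regularity of the Haar measure under the Carnot--Carath\'{e}odory metric coming from the dilation structure, the horizontal Riesz potential bound with a Hardy--Littlewood--Sobolev estimate calibrated to the homogeneous dimension, the subelliptic Poincar\'{e} inequality guaranteed by the H\"{o}rmander rank condition, and the interpolation between strong $L^1$ convergence and $L^{p^*}$ boundedness to close the Rellich--Kondrachov argument for $q<p^*$---are precisely those underlying the cited result.
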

We now prove the embedding result along the lines of Buccheri et al. \cite{BSM1}.
\begin{theorem}\label{lemma_on_embedding}
For every $p \in (1,\infty)$ and $s \in (0, 1)$, there exists a constant $C = C(N,s,\Omega)$ such
that $C_p = C_p(Q,s,\Omega) \to C \in (0,\infty)$ as $p \to \infty$ and $$\iint_{\Omega\times\Omega}\frac{|\bar{v}(x)-\bar{v}(y)|^p}{|y^{-1}\cdot x|^{Q+ps}}dy\leq C_p\|\nabla v\|_p,$$
 for every $v\in W_0^{1,p}(\Omega),$ where $\bar{v}$ is the extension
of $v$ 
 to $0$  
 in  $\Omega$.
\end{theorem}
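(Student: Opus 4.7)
The plan is to adapt the Euclidean embedding proof of Buccheri--Squassina--Mosconi~\cite{BSM1} to the sub-Riemannian setting by replacing straight-line interpolation with horizontal-curve connections on $\mathbb{G}$. The starting point is a quantitative version of Chow--Rashevsky: by the Ball--Box theorem / Nagel--Stein--Wainger estimates, for every $x,y\in\Omega$ there is a horizontal curve $\gamma:[0,1]\to\mathbb{G}$ joining $y$ to $x$ whose sub-Riemannian length is at most $C_0|y^{-1}\cdot x|$, with image staying in a fixed neighbourhood of $\Omega$. Integrating $\nabla_{\mathbb{G}} v$ along $\gamma$ gives the pointwise estimate
$$|\bar v(x)-\bar v(y)|\le C_0|y^{-1}\cdot x|\int_0^1|\nabla_{\mathbb{G}} v(\gamma(t))|\,dt.$$

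Next, I would raise this inequality to the $p$-th power, apply Jensen's inequality to move the $p$-th power inside the $t$-integral, and then integrate in $(x,y)\in\Omega\times\Omega$. Fubini together with left-invariance of the Haar measure (change of variables $z=y^{-1}\cdot x$) separates the gradient norm from a purely radial singular integral, yielding
$$\iint_{\Omega\times\Omega}\frac{|\bar v(x)-\bar v(y)|^p}{|y^{-1}\cdot x|^{Q+ps}}\,dx\,dy\le C_0^p\,\|\nabla_{\mathbb{G}} v\|_p^p \int_{|z|\le 2R}\frac{dz}{|z|^{Q+p(s-1)}},$$
where $R=\mathrm{diam}_{\mathbb{G}}(\Omega)$. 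Quasi-norm polar coordinates evaluate the last integral as $\sigma_Q R^{p(1-s)}/(p(1-s))$, with $\sigma_Q$ the surface measure of the unit quasi-ball. Taking $p$-th roots, the prefactor becomes $C_p\sim C_0\bigl(\sigma_Q R^{p(1-s)}/(p(1-s))\bigr)^{1/p}\to C_0 R^{1-s}$ as $p\to\infty$, since $\sigma_Q^{1/p}\to 1$ and $(p(1-s))^{-1/p}\to 1$, which delivers the announced finite positive limit $C=C(Q,s,\Omega)$.

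The main obstacle is the quantitative connectivity step: Chow--Rashevsky by itself only guarantees existence of horizontal curves, so the Nagel--Stein--Wainger / Ball--Box estimate is essential to obtain the length bound $L(\gamma)\le C_0|y^{-1}\cdot x|$ uniformly in $x,y$. A secondary technicality concerns pairs with one endpoint outside $\Omega$ that enter if a global seminorm is used as an intermediate device: these are handled via $\bar v\equiv 0$ off $\Omega$ together with a Hardy-type inequality on stratified groups, which converts $|\bar v(x)|/\mathrm{dist}(x,\partial\Omega)^s$ into a horizontal-gradient bound. The uniform-in-$p$ tracking of the constants is routine once this decomposition is in place, and is ultimately what gives the crucial asymptotic $C_p\to C\in(0,\infty)$.
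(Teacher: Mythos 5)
Your approach is a genuine alternative to the paper's, not a rephrasing of it. The paper extends $v$ by zero, splits the double integral into near and far regions $B_\rho(x)$ and $B_\rho(x)^c$, and on the near region integrates along the translation--dilation curve $t\mapsto L_x\circ T_t(z)$ while on the far region it invokes the Poincar\'e inequality on $W_0^{1,p}(\Omega)$. You instead work on the near region alone (which suffices since the integral in the statement is over $\Omega\times\Omega$, so $|y^{-1}\cdot x|\leq 2R$ automatically), and you replace the dilation curve by a horizontal curve from a quantitative Chow--Rashevsky/Ball--Box estimate. This is arguably the cleaner pointwise estimate, since a horizontal curve makes the pairing with $\nabla_{\mathbb{G}}v$ legitimate, whereas the dilation curve $T_t(z)$ has velocity with components in every layer. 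In exchange you lose the explicit group-algebraic structure of the curve, and that is exactly where a gap appears.

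The gap is in the Fubini/change-of-variables step. After Jensen you have, schematically,
\begin{equation*}
\iint_{\Omega\times\Omega}\frac{|\bar v(x)-\bar v(y)|^p}{|y^{-1}\cdot x|^{Q+ps}}\,dx\,dy
\;\le\;
C_0^p\int_0^1\!\!\int_{|z|\le 2R}\frac{1}{|z|^{Q-p(1-s)}}
\left(\int |\nabla_{\mathbb{G}}\bar v(\gamma_{y\cdot z,\,y}(t))|^p\,dy\right)dz\,dt,
\end{equation*}
and to bound the inner integral by $\|\nabla_{\mathbb{G}}v\|_p^p$ you need, for each fixed $t$ and $z$, that $y\mapsto \gamma_{y\cdot z,\,y}(t)$ is an injection with Jacobian uniformly bounded below. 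Citing Chow--Rashevsky only produces, for each pair $(x,y)$, \emph{some} short horizontal curve; these need not be unique, need not be chosen measurably in $(x,y)$, and a priori there is no Jacobian control. The Euclidean argument you are adapting hides this because the straight segment gives $y\mapsto y+tz$, a translation. The fix is to make the selection translation-equivariant: choose once and for all, measurably in $z$, a horizontal curve $\eta_z:[0,1]\to\mathbb{G}$ from the identity to $z$ with length $\le C_0|z|$, and set $\gamma_{x,y}(t):=y\cdot\eta_{y^{-1}\cdot x}(t)$. Then $y\mapsto\gamma_{y\cdot z,\,y}(t)=y\cdot\eta_z(t)$ is a right translation, hence Haar-measure preserving, and the inner integral collapses to $\|\nabla_{\mathbb{G}}\bar v\|_p^p$. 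This step must be stated; without it the estimate does not close. (Your last paragraph about pairs with one endpoint outside $\Omega$ and a Hardy-type inequality is not needed here: the integral in the statement ranges over $\Omega\times\Omega$, so no exterior points enter. The paper's own near/far split and Poincar\'e-inequality ingredient arise because it first passes to $\mathbb{G}\times\mathbb{G}$; you avoid that by staying on $\Omega\times\Omega$, which is consistent with the statement.)

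Once the translation-equivariant curve family is in place, the rest of your argument (Jensen, quasi-norm polar coordinates, $\sigma_Q^{1/p}\to1$, $(p(1-s))^{-1/p}\to1$, hence $C_p\to C_0R^{1-s}$) is correct and gives the claimed asymptotic $C_p\to C\in(0,\infty)$.
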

\begin{proof}
Let $\bar{v}$ be an extension of $v\in W_0^{1,p}(\Omega)$ by $0$. This implies that $\|\nabla\bar{v}\|_{L^p(\mathbb{R}^n)}=\|\nabla\bar{v}\|_{L^p(\Omega)}$. Consider,
\begin{align}\label{eq1}
	\begin{split}
		\iint_{\mathbb{G}\times\mathbb{G}}\frac{|\bar{v}(x)-\bar{v}(y)|^p}{|y^{-1}\cdot x|^{Q+ps}}dxdy&=2\iint_{\Omega\times\Omega^c}\frac{|\bar{v}(x)-\bar{v}(y)|^p}{|y^{-1}\cdot x|^{Q+ps}}dxdy+\iint_{\Omega\times\Omega}\frac{|{v}(x)-{v}(y)|^p}{|y^{-1}\cdot x|^{Q+ps}}dxdy.
	\end{split}
\end{align}
We shall estimate only the first term since the second one can be estimated similarly. Let $B_{\rho}(x)$ denote the ball centered at $x\in\Omega$ with radius $\rho$. Then we have:
\begin{align}\label{eq2}
\begin{split}
\iint_{\Omega\times\Omega^c\cap B_{\rho}(x)}\frac{|\bar{v}(x)-\bar{v}(y)|^p}{|y^{-1}\cdot x|^{Q+ps}}dxdy\leq &\iint_{\Omega\times B_{\rho}(x)}\frac{|\bar{v}(x)-\bar{v}(y)|^p}{|y^{-1}\cdot x|^{Q+ps}}dxdy\\
\leq &\iint_{\Omega\times B_{\rho}(0)}\frac{|\bar{v}(x)-\bar{v}(L_z(x))|^p}{|z^{-1}|^{Q+ps}}dxdz\\
=&\iint_{\Omega\times B_{\rho}(0)}\frac{|\int_0^1\frac{d}{dt}\bar{v}(L_x\circ T_{t}(z))dt|^p}{|z|^{Q+ps}}dxdz\\
\leq &\iint_{\Omega\times B_{\rho}(0)}\int_0^1|\nabla_{\mathbb{G}}\bar{v}(L_x\circ T_{t}(z)))|^p\\
&\times \left|\frac{d}{dt}(L_x\circ T_{t}(z))\right|^p\frac{1}{|z|^{Q+ps}}dtdxdz\\
\leq &\omega_Q\|\nabla_{\mathbb{G}}\bar{v}\|_{L^p(\mathbb{G})}^2\int_0^\rho r^{p(1-s)-1}dr\\
=&\frac{\omega_Q}{p(1-s)}\rho^{p(1-s)}\|\nabla_{\mathbb{G}}\bar{v}\|_{L^p(\mathbb{G})}^p ,
\end{split}
\end{align}
where $L_z, T_t$, respectively, stand for translation by $z$ and multiplication by $t$, and $\omega_Q$ stands for the volume of a unit ball in a stratified Lie group whose homogeneous dimension is $Q$.

On the other hand, we have by the Poincar\'{e} inequality that
\begin{align}\label{eq3}
\begin{split}
\iint_{\Omega\times\Omega^c\cap B_{\rho}(x)^c}\frac{|\bar{v}(x)-\bar{v}(y)|^2}{|y^{-1}\cdot x|^{Q+ps}}dxdy\leq &\iint_{\Omega\times B_{\rho}(x)^c}\frac{|\bar{v}(x)-\bar{v}(y)|^p}{|y^{-1}\cdot x|^{Q+ps}}dxdy\\
\leq &\frac{\omega_Q}{ps}\rho^{-ps}\|w\|_{L^p(\Omega)}^p\leq \frac{\omega_Q(C(\Omega,p))^p}{ps}\rho^{-ps}\|\nabla_{\mathbb{G}} w\|_{L^p(\Omega)}^p.
\end{split}
\end{align}
This completes the proof of Theorem \ref{lemma_on_embedding}.
\end{proof}

\begin{remark}
$X$ is a real separable and reflexive Banach space (see Ghosh et al.  \cite{GVM1}).
\end{remark}
\begin{remark}
	For fine bounds on the best constants of the Sobolev embedding $W_0^{s,p}(\Omega)\hookrightarrow L^q(\Omega)$  in the Euclidean setup one may refer to Cassini-Du \cite{cassini}.
\end{remark}

\section{Operators $A:X\to X^*$ and $B:L^q(\Omega)\to (L^q(\Omega))^*$}\label{s22}

We define an  eigenpair for problem \eqref{main}.
\begin{definition}\label{weak_soln}
We say that $(\lambda,u)\in \mathbb{R}\times X\setminus\{0\}$ is an {\it eigenpair} for problem \eqref{main} if for every $\phi\in X$, we have
\begin{align}\label{eigenfunction}
\begin{split}
\int_{\Omega}\nabla_{\mathbb{G}}u\cdot\nabla_{\mathbb{G}}\phi dx+\iint_{\mathbb{G}\times\mathbb{G}}\frac{(u(x)-u(y))(\phi(x)-\phi(y))}{|y^{-1}\cdot  x|^{Q+2s}}dxdy&=\lambda\|u\|_{L^q(\Omega)}^{2-q}\int_{\Omega}|u|^{q-2}u\phi dx.
\end{split}
\end{align}
\end{definition}
Note that Theorem \ref{lemma_on_embedding} guarantees that eigenpairs are well-defined.
Next, we define the operators $A:X\to X^*$ by
\begin{align}\label{A}
\begin{split}
\langle Av,w\rangle&=\int_{\Omega}\nabla_{\mathbb{G}} v\cdot\nabla_{\mathbb{G}} wdx+\iint_{\mathbb{G}\times\mathbb{G}}\frac{(v(x)-v(y))(w(x)-w(y))}{|y^{-1}\cdot  x|^{Q+2s}}dxdy,
\end{split}
\end{align}
and $B:L^q(\Omega)\to (L^q(\Omega))^*$ by
\begin{align}\label{B}
\begin{split}
\langle Bv,w\rangle&=\int_{\Omega}|u|^{q-2}uwdx,
\end{split}
\end{align}
where the symbols $X^*$, $(L^q(\Omega))^*$ denote the dual of $X$, $L^q(\Omega),$ respectively. We prove the following theorem about their properties.

\begin{theorem}\label{aux_res_1}
The operators $A:X\to X^*$ and $B:L^q(\Omega)\to (L^q(\Omega))^*$ are continuous. Moreover, $A$ is bounded, coercive, and monotone.
\end{theorem}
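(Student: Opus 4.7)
The plan is to address each of the five claims (continuity of $A$, continuity of $B$, boundedness of $A$, coercivity of $A$, monotonicity of $A$) separately, noting that each piece of $\langle Av,w\rangle$ is a bilinear form arising from an inner-product-like pairing, so Hölder/Cauchy--Schwarz will do most of the work. I would begin by splitting $A = A_1 + A_2$, where $A_1$ corresponds to the subgradient term and $A_2$ to the fractional seminorm term, and analogously handle each separately to keep the notation under control.

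For continuity and boundedness of $A$, I would apply the Cauchy--Schwarz inequality twice: once to show
\[
|\langle Av,w\rangle|\le \|\nabla_{\mathbb{G}} v\|_{L^2(\Omega)}\|\nabla_{\mathbb{G}} w\|_{L^2(\Omega)} + [v]_{s,2}\,[w]_{s,2}\le \|v\|\,\|w\|,
\]
where $[\cdot]_{s,2}$ denotes the Gagliardo seminorm, which directly yields $\|Av\|_{X^*}\le\|v\|$, hence boundedness. For continuity, given $v_n\to v$ in $X$, I would apply the same inequality to $A(v_n)-A(v)=A(v_n-v)$ (using linearity of $A$ as a map into $X^*$ only in its first slot, but this linearity does hold here since the pairing is linear in $v$) and deduce $\|A v_n - A v\|_{X^*}\le \|v_n-v\|\to 0$. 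Continuity of $B$ requires a little more care because $B$ is nonlinear: I would invoke the standard Nemytskii/Krasnoselskii continuity result for the map $v\mapsto |v|^{q-2}v$ from $L^q(\Omega)$ into $L^{q'}(\Omega)$, $q'=q/(q-1)$, together with Hölder's inequality $|\langle Bv,w\rangle|\le \||v|^{q-2}v\|_{q'}\|w\|_q=\|v\|_q^{q-1}\|w\|_q$.

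For coercivity, the key observation is that $\langle Av,v\rangle = \|\nabla_{\mathbb{G}} v\|_{L^2(\Omega)}^2 + [v]_{s,2}^2$, while the norm on $X$ is $\|v\|=\|\nabla_{\mathbb{G}} v\|_{L^2(\Omega)}+[v]_{s,2}$. The elementary inequality $(a+b)^2\le 2(a^2+b^2)$ immediately gives $\|v\|^2\le 2\langle Av,v\rangle$, so $\langle Av,v\rangle/\|v\|\ge \|v\|/2\to\infty$ as $\|v\|\to\infty$. Monotonicity follows from writing
\[
\langle Av-Aw,v-w\rangle = \|\nabla_{\mathbb{G}}(v-w)\|_{L^2(\Omega)}^2 + [v-w]_{s,2}^2 \ge 0,
\]
which is a direct consequence of the bilinearity of both terms in $\langle A\cdot,\cdot\rangle$. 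The only subtlety worth mentioning here is that Theorem \ref{lemma_on_embedding} is needed implicitly to ensure that the Gagliardo seminorm of functions in $X$ (extended by zero outside $\Omega$) is finite, so that $A$ is indeed well-defined on all of $X$; once this is in place, the arguments above are routine and the main obstacle is simply bookkeeping across the two operator components.
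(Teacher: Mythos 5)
Your proof is correct, and it departs from the paper's in a way worth noting, chiefly in how continuity of $A$ is obtained. The paper proves continuity of $A$ by taking $v_j\to v$ in $X$, invoking Egoroff's theorem together with Sobolev embedding to get a.e.\ convergence of the subgradients, extracting a weakly convergent subsequence and then arguing uniqueness of the weak limit, which establishes only $\langle Av_j,\phi\rangle\to\langle Av,\phi\rangle$ for each fixed $\phi\in X$ (i.e., demicontinuity), while you observe that $A$ is \emph{linear}, so continuity is an immediate consequence of the bound $\|A(v_n-v)\|_{X^*}\le\|v_n-v\|$; your route is shorter, avoids extracting subsequences, and actually yields norm continuity. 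Your coercivity argument is also more careful: the paper asserts $\langle Av,v\rangle=\|v\|_X^2$, but with the norm $\|v\|=\|\nabla_{\mathbb{G}}v\|_{2}+[v]_{s,2}$ this identity fails (it conflates the sum norm with the Hilbertian one); you correctly insert the elementary inequality $(a+b)^2\le 2(a^2+b^2)$ to obtain $\langle Av,v\rangle\ge\tfrac{1}{2}\|v\|^2$, which is what coercivity actually requires. (Similarly, the paper's boundedness estimate contains an intermediate inequality written in the wrong direction, $a+b\le(a^2+b^2)^{1/2}$, though its final conclusion is correct; your Cauchy--Schwarz/H\"older chain sidesteps this.) For $B$, the paper merely states that a similar argument applies, whereas you supply the standard Nemytskii operator continuity for $v\mapsto|v|^{q-2}v\colon L^q\to L^{q'}$ plus H\"older, which is the precise tool and tightens the argument. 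Both proofs reach the same conclusions; yours is cleaner where it leans on the linearity of $A$ and on the explicit comparison between the sum norm and the quadratic-sum expression.
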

\begin{proof}
{\bf Continuity:}~Suppose that $v_j\in X$ is such that $v_j\to v$ in the norm of $X$. Thus, a combination of Egoroff's theorem and the Sobolev embedding, we have up to a subsequence $\nabla_{\mathbb{G}} v_j(x)\to \nabla_{\mathbb{G}} v(x)$ a.e. in $\Omega$. We note that 
\begin{align}\label{eq_110_1}
\|\nabla_{\mathbb{G}} v_j\|_{L^{2}(\Omega)} \leq c,
\end{align} 
for some constant $c>0$ independent of $j$. Therefore, up to a subsequence, we have
\begin{align}\label{eq_110_2}
\nabla_{\mathbb{G}} v_j\rightharpoonup\nabla_{\mathbb{G}} v~\text{in}~L^{2}(\Omega).
\end{align}
In addition, we also have 
\begin{align}\label{eq_110_3}
\frac{v_j(x)-v_j(y)}{|y^{-1}\cdot  x|^{\frac{Q+2s}{2}}}\rightharpoonup \frac{v(x)-v(y)}{|y^{-1}\cdot  x|^{\frac{Q+2s}{2}}}~\text{on}~L^2(\mathbb{R}^n),
\end{align}
and since this weak limit is independent of the choice of the subsequence, it follows by \eqref{eq_110_2} and \eqref{eq_110_3} that
$$\underset{j\to\infty}{\lim}\langle Av_j,\phi\rangle=\langle Av,\phi\rangle,
\
\hbox{for every}
\
\phi\in X.
$$
 This proves that $A$ is continuous. It follows by a similar argument that $B$ is also continuous.\\

{\bf  Boundedness:}~By the Cauchy-Schwarz and the H\"{o}lder inequality, we have
\begin{align}\label{eq_110_4}
\begin{split}
\langle Av,\phi\rangle=&\int_{\Omega}\nabla_{\mathbb{G}} v\cdot\nabla_{\mathbb{G}}\phi dx+\iint_{\mathbb{G}\times\mathbb{G}}\frac{(v(x)-v(y))(\phi(x)-\phi(y))}{|y^{-1}\cdot  x|^{Q+2s}}dxdy\\
\leq & \|\nabla v\|_{2}\|\nabla \phi\|_2\\
&+\left(\iint_{\mathbb{G}\times\mathbb{G}}\frac{|v(x)-v(y)|^2}{|y^{-1}\cdot  x|^{Q+2s}}dxdy\right)^{1/2}\left(\iint_{\mathbb{G}\times\mathbb{G}}\frac{|\phi(x)-\phi(y)|^2}{|y^{-1}\cdot  x|^{Q+2s}}dxdy\right)^{1/2}\\
\leq & \left(\|\nabla v\|_2+\left(\iint_{\mathbb{G}\times\mathbb{G}}\frac{|v(x)-v(y)|^2}{|y^{-1}\cdot  x|^{Q+2s}}dxdy\right)^{1/2}\right)\|\phi\|_{X}\\
\leq & \left(\|\nabla v\|_2^2+\left(\iint_{\mathbb{G}\times\mathbb{G}}\frac{|v(x)-v(y)|^2}{|y^{-1}\cdot  x|^{Q+2s}}dxdy\right)^{2}\right)^{1/2}\|\phi\|_{X}=\|v\|_X\|\phi\|_X,
\end{split}
\end{align}	
hence,
$$\|Av\|_{X^*}=\underset{\|\phi\|_X\leq 1}\sup|\langle Av,\phi\rangle|\leq \|v\|_X\|\phi\|_X\leq \|v\|_X,$$
therefore $A$ is bounded.\\

{\bf Coercivity:}~We notice that 
\begin{align}\label{eq_110_5}
\langle Av,v\rangle&=\int_{\Omega}|\nabla_{\mathbb{G}} v|^2dx+\iint_{\mathbb{G}\times\mathbb{G}}\frac{|v(x)-v(y)|^2}{|y^{-1}\cdot  x|^{Q+2s}}dxdy=\|v\|_X^2,
\end{align}
which provides that $A$ is coercive.\\

{\bf Monotonicity:}~For $u\in X$, let
$$\mathcal{O}(u(x,y))=u(x)-u(y),~~d\mu=\frac{dxdy}{|y^{-1}\cdot  x|^{Q+2s}}.$$
Considering the following
\begin{align}\label{eq_110_6}
\begin{split}
\langle Av-A\phi,v-\phi\rangle=&\int_{\Omega}|\nabla_{\mathbb{G}} v-\nabla_{\mathbb{G}}\phi|^2dx\\
&+\iint_{\mathbb{G}\times\mathbb{G}}(\mathcal{O}(v(x,y))-\mathcal{O}(w(x,y)))((v(x)-w(x))-(v(y)-w(y)))d\mu
\geq  0,
\end{split}
\end{align}
we conclude that $A$ is monotone. This completes the proof of Theorem \ref{aux_res_1}.
\end{proof}

\section{Proof of Theorem \ref{main_result_3}}\label{s3}
We shall need the following results in the sequel.

\begin{theorem}\label{main_result_1}
Let $0<s<1$ and $1<q<2^*$. Then the following properties hold:
\begin{enumerate}
\item There exists a sequence $(w_j)\subset X\cap L^q(\Omega)$ such that $\|w_j\|_{L^q(\Omega)}=1$ and for every $\phi\in X$, we have
\begin{align}\label{conclusion1}
\begin{split}
\int_{\Omega}\nabla_{\mathbb{G}}w_j\cdot\nabla_{\mathbb{G}}\phi dx&+\iint_{\mathbb{G}\times\mathbb{G}}\frac{(w_j(x)-w_j(y))(\phi(x)-\phi(y))}{|y^{-1}\cdot  x|^{Q+2s}}dxdy\\
&=\mu_n\|w_j\|_{L^q(\Omega)}^{2-q}\int_{\Omega}|w_j|^{q-2}w_j\phi dx,
\end{split}
\end{align}
where $$\mu_j\geq \lambda:=\inf\{\int_{\Omega}|\nabla_{\mathbb{G}}u|^2dx+\iint_{\mathbb{G}\times\mathbb{G}}\frac{|u(x)-u(y)|^2}{|y^{-1}\cdot  x|^{Q+2s}}dxdy:u\in X\cap L^q(\Omega), \|u\|_{L^q(\Omega)}=1\}.$$
\item The sequences $(\mu_j)$ and $(\|w_{j+1}\|^p)$ satisfying \eqref{conclusion1} are nonincreasing and they converge to the same limit $\mu$, which is bounded below by $\lambda$. Moreover, there exists a subsequence $(n_j)$ such that both $(w_{j_i})$ and $(w_{j_{i+1}})$ converge in $X$ to the same limit $w\in X\cap L^q(\Omega)$ with $\|w\|_{L^q(\Omega)}=1,$ and $(\mu,w)$ is an eigenpair for problem \eqref{main}.
\end{enumerate}
\end{theorem}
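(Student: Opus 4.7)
The plan is to first establish $\lambda > 0$ and realize it as a minimum via the direct method, then construct $(w_j)$ inductively by an inverse-iteration scheme, and finally pass to the limit to obtain the eigenpair using the Browder--Minty machinery together with the compactness of the embedding $X \hookrightarrow L^q(\Omega)$.

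First I would check that $\lambda > 0$: the embedding results of Theorems \ref{sobolev_embedding} and \ref{lemma_on_embedding}, together with the definition of the norm on $X$, furnish a continuous and compact embedding $X \hookrightarrow L^q(\Omega)$ for every $1 \le q < 2^*$, which bounds the Rayleigh-type quotient $\|u\|_X^2/\|u\|_{L^q(\Omega)}^2$ below by a positive constant. The direct method then produces a minimizer $w_1$: any minimizing sequence $(u_k)$ with $\|u_k\|_{L^q(\Omega)}=1$ is bounded in $X$, so by reflexivity (see the remark following Theorem \ref{lemma_on_embedding}) it admits a weakly convergent subsequence $u_k \rightharpoonup w_1$ in $X$, hence strongly in $L^q(\Omega)$ by compactness. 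The normalization is preserved in the limit, and weak lower semicontinuity of $\|\cdot\|_X^2$ forces $\|w_1\|_X^2 \le \lambda$, so that $w_1$ attains $\lambda$. Applying Lagrange multipliers on the smooth constraint manifold $\{\|u\|_{L^q(\Omega)}^q = 1\}$ and then testing with $w_1$ itself yields that $w_1$ satisfies \eqref{conclusion1} with $\mu_1 = \lambda$.

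Next I would construct $w_{j+1}$ inductively by an inverse-iteration-type step. Using that $A$ is continuous, bounded, coercive and monotone (Theorem \ref{aux_res_1}), the Browder--Minty theorem produces a unique solution to an auxiliary equation built from $w_j$ (solving $A v_j = B w_j$), which after suitable $L^q$-normalization provides a candidate $w_{j+1}$ satisfying \eqref{conclusion1} with $\mu_{j+1}=\|w_{j+1}\|_X^2$. Monotonicity $\mu_{j+1} \le \mu_j$ and $\|w_{j+1}\|^p \le \|w_j\|^p$ would follow from H\"older's inequality in the defining identity combined with the variational characterization of $\lambda$; hence both bounded monotone sequences converge to a common limit $\mu \ge \lambda$.

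The main obstacle is the passage to the limit in \eqref{conclusion1}. From $\|w_j\|_X^2 = \mu_j$ bounded, I would extract $w_{j_i} \rightharpoonup w$ in $X$; the compact embedding upgrades this to strong $L^q$-convergence, so $\|w\|_{L^q(\Omega)} = 1$ is preserved. Strong $L^q$-convergence handles the right-hand side of \eqref{conclusion1} directly, via continuity of $B$ proved in Theorem \ref{aux_res_1}. For the left-hand side, I would test the difference $Aw_{j_i} - Aw$ against $w_{j_i}-w$ and exploit the monotonicity of $A$ together with the strong $L^q$-convergence to force $\langle Aw_{j_i}-Aw, w_{j_i}-w\rangle \to 0$; this upgrades weak to strong $X$-convergence via the Hilbertian structure encoded in the identity $\langle Au, u\rangle = \|u\|_X^2$. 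Passing to the limit in \eqref{conclusion1} then yields $Aw = \mu B w$, exhibiting $(\mu,w)$ as the desired eigenpair. The same compactness-plus-monotonicity argument applied to the shifted sequence $(w_{j_i+1})$ shows it converges to the same limit $w$, closing the statement.
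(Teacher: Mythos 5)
Your proposal is correct in its essentials, and it does invoke the same machinery that the paper points to by its bare reference to Garain--Ukhlov (an Ercole-style inverse iteration, plus the Browder--Minty solvability for the operator $A$). However, there is one structural oddity and one small technical mismatch worth flagging.

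The structural oddity: you open by running the direct method on the constrained minimization and using Lagrange multipliers to produce a minimizer $w_1$ that already satisfies the eigenvalue equation with $\mu_1 = \lambda$. Once you have that, the entire theorem is proved — take the constant sequence $w_j \equiv w_1$, $\mu_j \equiv \lambda$, and both parts of the statement become vacuously true. Your subsequent ``inverse-iteration-type step'' then starts from an exact eigenfunction and is automatically stationary (since $A w_1 = \lambda B w_1$, uniqueness in Browder--Minty forces $v_1$ to be a scalar multiple of $w_1$), so the monotonicity and passage-to-the-limit parts of your argument never actually engage. The Garain--Ukhlov/Ercole scheme the paper has in mind starts the iteration from an \emph{arbitrary} normalized $w_0 \in X \cap L^q(\Omega)$ (not a minimizer), and the whole content of the theorem is that the iterated pair $(\mu_j, w_j)$ nonetheless converges to an eigenpair. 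Your shortcut is legitimate precisely because the embedding $X \hookrightarrow L^q(\Omega)$ is compact for $q < 2^*$, which is exactly what makes direct minimization available — it buys brevity but sidesteps the constructive content the theorem is packaging.

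The technical mismatch: your iteration solves $A v_j = B w_j$ and normalizes, which yields a relation of the form $A w_{j+1} = c_j B w_j$ with $w_{j+1}$ on the left and $w_j$ on the right. Equation \eqref{conclusion1} in the statement has the same index $w_j$ on both sides, i.e. each $w_j$ is asserted to be an honest eigenfunction with eigenvalue $\mu_j$. As written, your freshly-built $w_{j+1}$ does not satisfy \eqref{conclusion1} unless the iteration has already stabilized. In your setup this is saved only by the fact you started at the minimizer; in the genuine Garain--Ukhlov iteration one works with the two-index relation $A w_{j+1} = \mu_j \|w_j\|_q^{2-q} B w_j$ and shows the two sequences $(w_j)$, $(w_{j+1})$ subconverge to a common limit, which is then an eigenfunction. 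You should either make the constant-sequence reduction explicit, or carry the two indices carefully through the monotonicity and limit arguments rather than asserting $w_{j+1}$ satisfies \eqref{conclusion1} outright.
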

\begin{proof}
One can follow the argument of the proof of Garain-Ukhlov  \cite[Theorem $2.2$]{PG1}.
\end{proof}
\begin{theorem}\label{main_result_2}
Let $0<s<1$ and $1<q<2^*$. Suppose that $(u_j)\subset X\cap L^q(\Omega)$ is such that $\|u_j\|_{L^q(\Omega)}=1$ and $\underset{j\to\infty}\lim\|u_j\|^p=\lambda$. Then there exists a subsequence $(u_{j_i})$ which converges weakly in $X$ to $u\in X\cap l^q(\Omega)$ with $\|u\|_{L^q(\Omega)}=1$ such that $$\lambda=\int_{\Omega}|\nabla_{\mathbb{G}}u|^pdx+\iint_{\mathbb{G}\times\mathbb{G}}\frac{|u(x)-u(y)|^{2}}{|y^{-1}\cdot  x|^{Q+2s}}dxdy.$$
Moreover, $(\lambda,u)$ is an eigenpair for problem \eqref{main} and every associated eigenfunction of $\lambda$ is a scalar multiple of the vector at which $\lambda$ is reached.
\end{theorem}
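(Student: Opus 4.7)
The plan is to extract a weakly convergent subsequence of the minimizing sequence $(u_j)$, verify that its weak limit lies on the $L^q$-sphere and attains $\lambda$ by lower semi-continuity, derive the Euler--Lagrange equation via Lagrange multipliers, and finally address the uniqueness claim.

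\textbf{Step 1 (compactness and extraction of a subsequence).} Since $\|u_j\|^2 \to \lambda$, the sequence $(u_j)$ is bounded in $X$. By reflexivity of $X$, there is a subsequence (still denoted $(u_j)$) and some $u \in X$ with $u_j \rightharpoonup u$ in $X$. The chain of embeddings $X \hookrightarrow W_0^{1,2}(\Omega) \hookrightarrow L^q(\Omega)$, compact for $1 < q < 2^*$ by Theorem~\ref{sobolev_embedding}, then yields (after a further subsequence) $u_j \to u$ strongly in $L^q(\Omega)$ and pointwise a.e.\ in $\Omega$. In particular, $\|u\|_{L^q(\Omega)} = \lim \|u_j\|_{L^q(\Omega)} = 1$, so $u \in X \cap L^q(\Omega)$ is admissible in the variational characterization of $\lambda$ from Theorem~\ref{main_result_1}.

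\textbf{Step 2 (weak lower semi-continuity and attainment).} The quadratic form
\[
v \longmapsto \int_{\Omega} |\nabla_{\mathbb{G}} v|^2\,dx + \iint_{\mathbb{G}\times\mathbb{G}} \frac{|v(x)-v(y)|^2}{|y^{-1}\cdot x|^{Q+2s}}\,dx\,dy
\]
is convex and continuous on $X$, hence weakly lower semi-continuous. Combined with the definition of $\lambda$ and the hypothesis $\|u_j\|^2 \to \lambda$, this yields
\[
\lambda \le \|u\|^2 \le \liminf_{j\to\infty} \|u_j\|^2 = \lambda,
\]
which establishes the displayed identity and the fact that $u$ realizes the infimum.

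\textbf{Step 3 (Euler--Lagrange equation).} To see that $(\lambda,u)$ is an eigenpair in the sense of Definition~\ref{weak_soln}, I would apply Lagrange multipliers to the $C^1$-constrained problem $\min\{\tfrac{1}{2}\langle Av,v\rangle : \|v\|_{L^q(\Omega)}^q = 1\}$, where $A$ and $B$ are the operators of Section~\ref{s22}; Theorem~\ref{aux_res_1} provides their continuity and differentiability. Regularity of the constraint at $u \neq 0$ produces $\mu \in \mathbb{R}$ with $Au = \mu Bu$ in $X^*$. Testing with $\phi = u$ yields $\lambda = \langle Au,u\rangle = \mu \int_{\Omega} |u|^q\,dx = \mu$, and since $\|u\|_{L^q}^{2-q} = 1$ this is exactly \eqref{eigenfunction}.

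\textbf{Step 4 (uniqueness up to a scalar --- the main obstacle).} The most delicate part is showing that any eigenfunction associated to $\lambda$ is a scalar multiple of $u$. Given such a $v \in X\setminus\{0\}$, rescale so that $\|v\|_{L^q}=1$; then $v$ is itself a minimizer, and positivity of both $u$ and $v$ follows from Theorem~\ref{main_result_3}. The natural tool is a Picone-type identity for the mixed operator $-\mathcal{L}+(-\Delta)^s$: one tests the equation for $u$ against $v^2/u$ and the equation for $v$ against $u^2/v$, using the local Picone inequality for the subgradient term together with its discrete counterpart for the nonlocal Gagliardo form, then subtracts to deduce that equality must hold pointwise in each Picone inequality, which forces $u/v$ to be constant. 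The main difficulty is that the Picone comparison must be performed \emph{simultaneously} for the local and nonlocal parts --- neither of which is homogeneous of degree $q$ --- so the argument will require adapting Brasco--Franzina-type estimates to the sub-Riemannian setting with a fractional tail, and this is precisely where the hypothesis $1 < q$ (ensuring that the test functions $v^2/u$, $u^2/v$ lie in $X$ and that the strict form of Picone applies) is used.
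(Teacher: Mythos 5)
Your Steps~1--3 (bounded minimizing sequence, extraction of a weakly convergent subsequence, strong $L^q$ convergence via compact embedding, weak lower semicontinuity of the quadratic form, and derivation of the Euler--Lagrange equation by a Lagrange multiplier argument using the operators $A,B$) are correct and are the standard variational route for the first part of the theorem. Note, however, that the paper itself does not give a self-contained argument here: its entire ``proof'' is the sentence that one can follow Garain--Ukhlov~\cite[Theorem~2.2]{PG1} with Ercole~\cite[Proposition~2]{ercole1} in place of~\cite[Theorem~1]{ercole1}, so there is no paper-internal proof of Step~4 to compare against.

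Step~4 is where your proposal has a genuine gap, beyond the hand-waving you already acknowledge. Testing the equation for $u$ against $v^2/u$ and the equation for $v$ against $u^2/v$, the local and fractional Picone inequalities give
\[
\lambda\int_\Omega u^{q-2}v^2\,dx \le \lambda
\quad\text{and}\quad
\lambda\int_\Omega v^{q-2}u^2\,dx \le \lambda,
\]
hence $\int_\Omega \bigl(u^{q-2}v^2+v^{q-2}u^2\bigr)dx \le 2$. To trigger the equality case of Picone one then needs the \emph{reverse} pointwise inequality $a^{q-2}b^2+b^{q-2}a^2\ge a^q+b^q$, which follows from the factorization
\[
a^{q-2}b^2+b^{q-2}a^2-a^q-b^q=-(a^2-b^2)\bigl(a^{q-2}-b^{q-2}\bigr).
\]
This is $\ge 0$ for $1<q\le 2$ but is $\le 0$ (and strictly negative where $u\ne v$) for $q>2$. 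So the Picone/hidden-convexity route you describe, and the Brasco--Franzina argument you invoke, closes only in the sub-homogeneous range $q\le p=2$; for $2<q<2^*$ the sign is wrong and the argument simply does not conclude. You also misattribute the operative restriction: you write that $1<q$ is what makes the test functions admissible and Picone strict, but $1<q$ plays no such role here --- the dichotomy that actually matters in your Step~4 is $q\le 2$ versus $q>2$, and in the latter case you would need a genuinely different mechanism (which is precisely why the paper invokes Ercole's inverse-iteration proposition rather than a Picone identity). As it stands, your proposal establishes the attainment of $\lambda$ and the Euler--Lagrange equation for all $1<q<2^*$, but the uniqueness-up-to-scalar claim is only proved for $q\le 2$.
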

\begin{proof}
One can follow the argument of the proof of  Garain-Ukhlov  \cite[Theorem 2.2]{PG1}. However,  we prefer to apply Ercole \cite[Proposition $2$]{ercole1} in place of Ercole \cite[Theorem $1$]{ercole1}.
\end{proof}

We are now in a position to prove our first main result.
\begin{proof}[Proof of Theorem \ref{main_result_3}]
Since \eqref{main} is homogeneous, we can without loss of generality,
 let $\|u\|_q=1$. Furthermore, for every $k\geq 1,$ let $\Omega_k:=\{x\in\Omega:u(x)>k\}$ and $v:=(u-k)_+$ be a test function for \eqref{eigenfunction}. With these choices we obtain
\begin{align}\label{eig1}
\begin{split}
\int_{\Omega_k}|\nabla_{\mathbb{G}}u|^2dx &+\iint_{\mathbb{G}\times\mathbb{G}}\frac{(u(x)-u(y))((u-k)_+(x)-(u-k)_+(y))}{|y^{-1}\circ  x|^{Q+2s}}dxdy\\
&=\lambda\int_{\Omega}u^{q-1}(u-k)_+dx.
\end{split}
\end{align}
The second term on the left hand side of \eqref{eig1} is nonnegative and hence
\begin{align}\label{eig2}
\begin{split}
\int_{\Omega_k}|\nabla_{\mathbb{G}}u|^2dx&\leq \lambda\int_{\Omega}u^{q-1}(u-k)_+dx.
\end{split}
\end{align}
We split the proof into two parts.\\

{\bf First case:~ $q\leq 2$}. Since $k\geq 1$, we have $u^{q-1}\leq u$ on $\Omega_k$.  From \eqref{eig2}  we have
\begin{align}\label{eig3}
\begin{split}
\int_{\Omega_k}|\nabla_{\mathbb{G}}u|^2dx&\leq \lambda\int_{\Omega}u(u-k)dx=\lambda\int_{\Omega_k}(2(u-k)^2+2k(u-k))dx,
\end{split}
\end{align}
where the last inequality is obtained using $a+b\leq2(a+b)$.  Using the Sobolev inequality we get
\begin{align}\label{eig4}
\begin{split}
(1-2S\lambda|\Omega_k|^{2/N})\int_{\Omega_k}(u-k)^pdx&\leq k\lambda|\Omega_k|^{2/N}\int_{\Omega_k}(u-k)dx,
\end{split}
\end{align}
where $S$ is the best Sobolev constant. Note that $\| u\|_{1}\geq k|\Omega_k|$ and therefore, for every $k\geq k_* = $  $(4S\lambda)^{\frac{N}{2}} \| u\|_{1},$ we have
$$2S\lambda|\Omega(k)|
  ^\frac{2}{N} \leq \frac{1}{2}. $$\\
Using  \eqref{eig3}, for every $ k\geq \max \{ k_*, 1\},$ we obtain
\begin{equation}\label{Eq 0.3}
	\int _{L(k)} (u-k)^2 dx \leq 4S\lambda k |\Omega_k|^\frac{2}{N} \int_{\Omega_k} (u-k)dx,
\end{equation}
and invoking the H\"{o}lder inequality and the estimate \eqref{Eq 0.3}, we find 
\begin{equation}\label{Eq 0.4}
	\int_{\Omega_k} (u-k)dx \leq  (4S\lambda)k|\Omega_k|^{1+\frac{2}{N}},
\end{equation}
so taking into account \eqref{Eq 0.4}, we conclude by invoking  Ladyzhenskaya-Ural'tseva  \cite[Lemma $5.1$]{lady0}
(which holds irrespective of the group structure on $\mathbb{R}^n$), to get $ u \in L^{\infty} (\Omega).$ \\

{\bf Second case: $q>2.$} Using the inequality
 
 $$ (a+b)^{q-1} \leq 2^{q-1} (a^{q-1}+ b^{q-1}),
 \
 \hbox{for every}
 \
 a,b \geq 0,
 $$
 in
  $(4.1),$   we get \\
\begin{equation}\label{Eq 0.5}
	\int_{\Omega_k} |\nabla_{\mathbb{G}} u|^2 dx \leq \lambda \int_{\Omega_k} (2^{q-1} (u-k)^q) + 2^{q-1}k^{q-1}(u-k))dx
\end{equation}
and using the Sobolev inequality with $r = q$ in  estimate \eqref{Eq 0.5}, we find 
\begin{equation}\label{Eq 0.6}
	\left(\int_{\Omega_k} (u-k)^q dx \right) ^\frac{2}{q} \leq S \lambda |\Omega_k|^{2(\frac{1}{q}-\frac{1}{2}+ \frac{1}{N} )} \int_{\Omega_k} (2^{q-1}(u-k)^q + 2^{q-1} k^{q-1} (u-k))dx,\\
\end{equation}
where $S>0$ is the Sobolev constant. Since\\
$$ \int_{\Omega_k} (u-k)^q dx \leq \|u\|^q _{q} =1$$\\
and $ q>2,$ the quantity on the left-hand side of \eqref{Eq 0.6} can be estimated from below as follows:
\begin{equation}\label{Eq 0.7}
	\left(\int_{\Omega_k} (u-k)^q dx \right) ^\frac{2}{q} = \left(\int_{\Omega_k} (u-k)^q dx \right) ^{\frac{2-q}{q} +1} \geq \int_{\Omega_k} (u-k)^q dx.
\end{equation}
Using 
\eqref{Eq 0.7}
 in \eqref{Eq 0.6}, 
 we get 
\begin{equation}\label{Eq 0.8}
	\begin{aligned}
		&(1- S \lambda 2^{q-1} |\Omega_k|^{p(\frac{1}{q}-\frac{1}{2}+ \frac{1}{N} )})  \int_{\Omega_k} (u-k)^q dx\\
		& \leq S \lambda 2^{q-1} k^{q-1}  |\Omega_k|^{2(\frac{1}{q}-\frac{1}{2}+ \frac{1}{N} )}
		 \int_{\Omega_k} (u-k) dx.
	\end{aligned}
\end{equation}
Let $$\alpha = 2\left(\frac{1}{q} -\frac{1}{2} + \frac{1}{N}\right),$$ and note that it is positive since $1<q<2^\ast .$ Choose $K_1 =(S\lambda 2^q)^{\frac{1}{\alpha}}\|u\|_{1}.$ Then since $ k|\Omega_k| \leq \|u\|_{1}, $ for every $k \geq k_*,$ we have
$$ S \lambda 2^{q-1} |\Omega_k|^{\alpha} \leq \frac {1}{2}. $$
Using this property in \eqref{Eq 0.8}, we obtain  
\begin{equation}\label{Eq 0.9}
	\int_{L(k)} (u-k)^q dx \leq  S \lambda 2^q k^{q-1} |L(k)|^{\alpha}  \int_{L(k)} (u-k) dx,
\end{equation}
so by the H\"{o}lder inequality and the estimate \eqref{Eq 0.9}, we arrive at the following estimate
\begin{equation}\label{Eq 1.0}
	\int_{L(k)} (u-k) dx \leq   (S \lambda 2^q)^{\frac{1}{q-1}}k|L(k)|^{1+\frac{\alpha}{q-1}}.
\end{equation}
Taking into account \eqref{Eq 1.0} and invoking Ladyzhenskaya-Ural'tseva  \cite[Lemma $5.1$]{lady0}, we get $u \in L^{\infty} (\Omega).$

The other assertion follows by Garain-Kinnunen  \cite[Theorem $8.4$]{PG0} which considers the following problem:
\begin{align}\label{juha1}
\begin{split}
-\Delta_pu+(-\Delta_{p,K})^su=&0~\text{in}~\Omega,
\end{split}
\end{align}
where $1<p<\infty$ and $$(-\Delta_{p,K})^su=\text{P.V.}
\int_{\mathbb{R}^n}|u(x)-u(y)|^{p-2}(u(x)-u(y))K(x,y)dy,$$ here P.V. denotes the principle value. The weak Harnack inequality proved in  Garain-Kinnunen \cite{PG0} also works for the
 Dirichlet boundary condition $u|_{\partial\Omega}=0$. In addition, we note that the proof of  Garain-Kinnunen \cite[Theorem 8.4]{PG0} works even for a sub-Laplacian $\mathcal{L}$ that acts on functions defined on a stratified Lie group since the group structure does not affect the proof of the result. Let us denote $v$ to be a nonnegative solution of problem \eqref{main}. On subtracting the weak formulation of \eqref{juha1} (for $p=2$) from \eqref{main} we get
\begin{align}\label{WF_Difference0}
	\begin{split}
		\int_{\Omega}\nabla_{\mathbb{G}}(v-u)&\cdot\nabla_{\mathbb{G}}(v-u)^-dx\\
		&+\iint_{\Omega\times\Omega}((v-u)(x)-(v-u)(y))((v-u)^-(x)-(v-u)^-(y))|y^{-1}\cdot x|^{-Q-2s}dx\\
		=&\lambda\|u\|_q^{2-q}\int_{\Omega}v^{q-1}(v-u)^-dx.
	\end{split}
\end{align}
This implies
\begin{align}\label{WF_Difference1}
	\begin{split}
		0\geq-\int_{\Omega}|\nabla_{\mathbb{G}}(v-u)^-|^2dx &-\iint_{\Omega\times\Omega}\frac{|(v-u)^-(x)-(v-u)^-(y)|^2}{|y^{-1}\cdot x|^{Q+2s}}dx\\
		=&\lambda\|u\|_q^{2-q}\int_{\Omega}v^{q-1}(v-u)^-dx\geq 0,
	\end{split}
\end{align}
hence the Lebesgue measure of the set $\{x\in\Omega:v(x)\leq u(x)\}$ is zero and therefore
 $v\geq u$ a.e. in $\Omega$. By  \cite{PG0}, we already have $u>0$ on any compact subset of $\Omega$ and hence we have the same conclusion for $v$.
This completes the proof of Theorem \ref{main_result_3}.
\end{proof}

\section{Proof of Theorem \ref{th2}}\label{s4}
A natural question to raise at this juncture is what happens when $q=2^*$? Of course, it is impossible to figure out the answer for a general stratified Lie group involving a fractional Laplacian - owing to the unavailability of the derivative of the distance function. However, in order to be able to answer this question, we shall study the {\it Brezis-Nirenberg} type of problem on a Heisenberg group $\mathbb{G}:=\mathbb{H}^n $ (see Molica Bisci-Repov\v{s} \cite{bisci}) for which we shall establish a {\it Pohozaev-like identity}.
 The problem is as follows:
\begin{equation}\label{Brezis-Nirenberg}
	\left\{ \begin{array}{ll}-\mathcal{L} u+(-\Delta)^su=\lambda\|u\|_{2^*}^{2-2^*}|u|^{2^*-2}u=:g(u)~~~\text{on}~\Omega\\
u=0~~~\text{on}~\mathbb{R}^n\setminus\Omega.\end{array} 
	\right.
\end{equation}
Let $$G(u):=\int_0^ug(t)dt$$ be the primitive of the continuous function $g$. 
The vector fields $$X_i=\frac{\partial}{\partial x_i}-\frac{y_i}{2}\frac{\partial}{\partial z}, Y_i=\frac{\partial}{\partial y_i}+\frac{x_i}{2}\frac{\partial}{\partial z}, Z=\frac{\partial}{\partial z},
\
\hbox{where}
\
i=1,2,\cdots,n
$$ 
generate
 the Lie algebra corresponding to the Heisenberg group $\mathbb{H}^n$ of topological dimension $2n+1$. The corresponding group law for $(a,b,c),(a',b'c')\in\mathbb{R}^{2n}\times\mathbb{R}$ is 
	$$(a,b,c)\circ (a',b'c')^{-1}=(a+a',b+b',c+c'+2^{-1}(ab'-ba')).$$
Under this group law, the inverse is $(a,b,c)^{-1}=(-a,-b,-c)$ and the identity is $(0,0,0)$. The distance function for a Heisenberg group is defined as 
$$|(a,b,c)|=[(|a|^2+|b|^2)^2+|c|^2]^{1/4}.$$
However, before we establish a Pohozaev-like identity, we need to prove Theorem \ref{identity1}.

\begin{theorem}\label{identity1}
Suppose that $u_1,u_2\in W^{1,1}({\mathbb{H}^n})$ have disjoint compact supports, say $\Omega_1, \Omega_2,$ respectively. Then
\begin{align}\label{id1}
\begin{split}
\int_{\Omega_1}(\bar{x}\cdot\nabla_{\mathbb{H}^n} u_1)(-\Delta)^su_2dx+&\int_{\Omega_2}(\bar{x}\cdot\nabla_{\mathbb{H}^n} u_2)(-\Delta)^su_1dx\\
&=\left(\frac{2s-Q}{2}\right)\int_{\Omega_1}u_1(-\Delta)^su_2dx+\left(\frac{2s-Q}{2}\right)\int_{\Omega_2}u_2(-\Delta)^su_1dx,
\end{split}
\end{align}
where $\bar{x}=(x_1^{(1)},x_2^{(1)},\cdots,x_{N_1}^{(1)},\cdots,rx_1^{(r)},rx_2^{(r)}, \cdots,rx_{N_r}^{(r)})$
and
 $\nabla_{\mathbb{H}^n}$ is the subgradient corresponding to the Heisenberg group $\mathbb{H}^n$. 
\end{theorem}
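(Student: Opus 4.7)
The strategy is to exploit the disjoint support hypothesis, which turns the (principal value) nonlocal operators $(-\Delta)^s u_i$ into honest absolutely convergent integrals, so that both sides of \eqref{id1} become explicit double integrals against the kernel $|y^{-1}\cdot x|^{-(Q+2s)}$. The identity is then obtained by differentiating at $t=1$ a scaling relation that comes directly from the anisotropic Heisenberg dilation $T_t$.

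\emph{Step 1: Rewriting both sides as symmetric double integrals.} Since $\Omega_1\cap\Omega_2=\emptyset$ and $u_i$ is supported in $\Omega_i$, for $x\in\Omega_1$ the integrand defining $(-\Delta)^s u_2(x)$ has no singularity and
\[
(-\Delta)^s u_2(x)\;=\;-\int_{\Omega_2}\frac{u_2(y)}{|y^{-1}\cdot x|^{Q+2s}}\,dy,
\]
with the symmetric formula for $(-\Delta)^s u_1$ on $\Omega_2$. Introducing the bilinear form
\[
F[v,w]\;:=\;\iint_{\Omega_1\times\Omega_2}\frac{v(x)\,w(y)}{|y^{-1}\cdot x|^{Q+2s}}\,dx\,dy,
\]
and using that the Heisenberg gauge satisfies $|g^{-1}|=|g|$, a relabeling $x\leftrightarrow y$ in the second term on each side of \eqref{id1} turns the identity into the purely algebraic statement
\[
F[\bar{x}\cdot\nabla_{\mathbb{H}^n}u_1,\,u_2]+F[u_1,\,\bar{y}\cdot\nabla_{\mathbb{H}^n}u_2]\;=\;(2s-Q)\,F[u_1,u_2].
\]

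\emph{Step 2: Scaling relation.} Apply the change of variables $x=T_t x'$, $y=T_t y'$ in $F[u_1,u_2]$ and use the three structural properties of the Heisenberg dilation: the Jacobian of $T_t$ is $t^Q$; $T_t$ is a group automorphism, giving $T_t(y)^{-1}\cdot T_t(x)=T_t(y^{-1}\cdot x)$; and the gauge is $1$-homogeneous under $T_t$, i.e.\ $|T_t g|=t\,|g|$. Collecting the powers of $t$ produces
\[
F[u_1\circ T_t,\;u_2\circ T_t]\;=\;t^{\,2s-Q}\,F[u_1,u_2],
\qquad t>0.
\]

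\emph{Step 3: Differentiation at $t=1$.} The one-parameter family $T_t$ has infinitesimal generator $\bar{x}\cdot\nabla=\tfrac{d}{dt}\big|_{t=1}(\,\cdot\,)\circ T_t$, precisely the Euler field associated with the grading $(1,2,\dots,k)$; this is the intended meaning of the symbol $\bar{x}\cdot\nabla_{\mathbb{H}^n}$ in the statement. Differentiating the scaling relation at $t=1$, and exchanging derivative and integral (justified because the kernel $|y^{-1}\cdot x|^{-(Q+2s)}$ is bounded on the compact set $\Omega_1\times\Omega_2$, while $u_i\in W^{1,1}$), yields exactly the reduced identity from Step~1, completing the proof.

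The main obstacle is purely conceptual rather than technical: one must correctly identify the symbol $\bar{x}\cdot\nabla_{\mathbb{H}^n}$ with the Euler operator generating $T_t$ (as opposed to a literal dot product between a full $n$-vector and the $n_1$-dimensional horizontal gradient), and verify that the Heisenberg group structure intervenes only through the three homogeneity properties of $T_t$ listed in Step~2. Once this is recognized, the computation is the nonlocal/Heisenberg analogue of the classical bilinear Pohozaev identity of Ros-Oton and Serra, with all singular issues removed by the disjointness of the supports.
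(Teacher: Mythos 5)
Your proof is correct, and it takes a genuinely different route from the paper's. The paper first establishes a pointwise commutation relation $(-\Delta)^s(\bar{x}\cdot\nabla_{\mathbb{H}^n} u_j)=\bar{x}\cdot(-\Delta)^s\nabla_{\mathbb{H}^n} u_j+2s(-\Delta)^su_j$ on $\mathbb{H}^n\setminus\Omega_i$ by directly manipulating the kernel $|y^{-1}\circ x|^{-(Q+2s)}$ inside the integral, and then cycles through several integrations by parts (their equations (4.4)--(4.8)) to assemble the symmetric identity. You instead avoid all pointwise commutator algebra: you use the disjoint-support hypothesis to replace the principal value by an absolutely convergent integral, package both sides of the identity into the single bilinear form $F[v,w]$, pull out the exponent $t^{2s-Q}$ from the three homogeneity properties of the dilation $T_t$ (Jacobian $t^Q$, automorphism property, $1$-homogeneity of the gauge), and then differentiate at $t=1$. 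Each route hinges on the same structural fact --- the kernel is $(-Q-2s)$-homogeneous under $T_t$ --- but your version localizes all the homogeneity into one clean scaling identity, whereas the paper distributes it through the pointwise commutator and the subsequent integration by parts. Your approach also makes the hypotheses $u_i\in W^{1,1}$ with compact disjoint supports do exactly the work they should (boundedness of the kernel and validity of differentiation under the integral sign), and it explicitly resolves the notational ambiguity of $\bar{x}\cdot\nabla_{\mathbb{H}^n}$ by identifying it with the Euler field generating $T_t$, which is consistent with the paper's own use of $Z$ in the proof of Theorem 1.2. One minor caveat: for full rigor you should observe that the disjointness of the supports of $u_1\circ T_t$ and $u_2\circ T_t$ persists for $t$ in a neighborhood of $1$ (it does, since $T_t$ is a homeomorphism sending disjoint compacts to disjoint compacts), which is what makes the differentiation under the integral sign uniform in $t$.
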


\begin{proof}[Proof of Theorem \ref{identity1}]
We first claim that 
\begin{align}\label{id2}
(-\Delta)^s(\bar{x}\cdot\nabla_{\mathbb{H}^n} u_j)&=\bar{x}\cdot(-\Delta)^s\nabla_{\mathbb{H}^n} u_j+2s(-\Delta)^su_j~\text{in}~\mathbb{H}^n\setminus\Omega_i,
\
\hbox{for every}
\
i=1,2.
\end{align}
Clearly, we have $u_j\equiv 0$ on $\mathbb{H}^n\setminus\Omega_i$ and hence using the definition of $(-\Delta)^s$ for $x\in \mathbb{H}^n\setminus\Omega_i,$ we obtain
\begin{align}\label{id3}
\begin{split}
(-\Delta)^s(\bar{x}\cdot\nabla_{\mathbb{H}^n} u_i(x))=&c_{N,s}\int_{\Omega_i}\frac{-\bar{y}\cdot\nabla_{\mathbb{H}^n} u_i(y)}{|y^{-1}\circ  x|^{Q+2s}}dy\\
=& c_{N,s}\int_{\Omega_i}\frac{\sum_{k=1}^r\sum_{l=1}^{N_j}k(x_l^{(k)}-y_l^{(k)})\frac{\partial}{\partial y_l^{(k)}}u_i(y)}{|y^{-1}\circ  x|^{Q+2s}}dy\\
&+c_{N,s}\int_{\Omega_i}\frac{\sum_{k=1}^r\sum_{l=1}^{N_j}kx_l^{(k)}\frac{\partial}{\partial y_l^{(k)}}u_i(y)}{|y^{-1}\circ  x|^{Q+2s}}dy\\
=&  c_{N,s}\int_{\Omega_i}-\sum_{k=1}^r\sum_{l=1}^{N_j}k\frac{(x_l^{(k)}-y_l^{(k)})}{|y^{-1}\circ  x|^{Q+2s}}\frac{\partial}{\partial y_l^{(k)}}u_i(y)+\bar{x}\cdot (-\Delta)^s\nabla_{\mathbb{H}^n} u_i(x)\\
=&  c_{N,s}\int_{\Omega_i}-\frac{2s}{|y^{-1}\circ  x|^{Q+2s}}u_i(y)+\bar{x}\cdot (-\Delta)^s\nabla_{\mathbb{H}^n} u_i(x)\\
=& 2s(-\Delta)^su_i(y)+\bar{x}\cdot (-\Delta)^s\nabla_{\mathbb{H}^n} u_i(x),
\
\hbox{for every}
\
i=1,2,
\end{split}
\end{align}
hence
\begin{align}\label{equ11}
\begin{split}
\int_{\Omega_1}(\bar{x}\cdot\nabla_{\mathbb{H}^n} u_1)(-\Delta)^su_2dx=&-Q\int_{\Omega_1}u_1(-\Delta)^su_2dx-\int_{\Omega_1}u_1\bar{x}\cdot\nabla_{\mathbb{H}^n} (-\Delta)^su_2dx,
\end{split}
\end{align}
so integrating by parts along with \eqref{id3}, we obtain
\begin{align}\label{equ12}
\begin{split}
\int_{\Omega_1}u_1\bar{x}\cdot\nabla_{\mathbb{H}^n}(-\Delta)^su_2dx=&\int_{\Omega_1}u_1(-\Delta)^s(\bar{x}\cdot\nabla_{\mathbb{H}^n} u_2)dx-2s\int_{\Omega_1}u_1(-\Delta)^su_2dx\\
=& \int_{\Omega_1}(-\Delta)^su_1(\bar{x}\cdot\nabla_{\mathbb{H}^n} u_2)dx-2s\int_{\Omega_1}u_1(-\Delta)^su_2dx,
\end{split}
\end{align}
therefore by \eqref{equ11} and \eqref{equ12},
\begin{align}\label{equ13}
\begin{split}
\int_{\Omega_1}(\bar{x}\cdot\nabla_{\mathbb{H}^n} u_1)(-\Delta)^su_2dx=& -\int_{\Omega_1}(-\Delta)^su_1(\bar{x}\cdot\nabla_{\mathbb{H}^n} u_2)dx+(2s-Q)\int_{\Omega_1}u_1(-\Delta)^su_2dx,
\end{split}
\end{align}
thus, by again integrating by parts,
\begin{align}\label{equ14}
\begin{split}
\int_{\Omega_1}u_1(-\Delta)^su_2dx=&\frac{1}{2}\int_{\Omega_1}u_1(-\Delta)^su_2dx+\frac{1}{2}\int_{\Omega_2}u_2(-\Delta)^su_1dx,
\end{split}
\end{align}
and the equality \eqref{id1} follows. This completes the proof of Theorem \ref{identity1}
\end{proof}
\begin{proof}[Proof of Theorem \ref{th2}]
 We shall
 follow the standard technique of deriving the identity - by multiplying the PDE in \eqref{Brezis-Nirenberg} by
 $$Zu:=\sum_{j=1}^r\sum_{i=1}^{n_j}ix_i^{(j)}\frac{\partial u}{\partial x_i^{(j)}}.$$
  We  note of  that $\text{div}Z=Q$. 
  Invoking Bonfiglioli et al.  \cite[Proposition 1.6.1]{AB3} and integrating by parts, we get
\begin{align}\label{pohozaev_identity1}
\begin{split}
-\int_{\Omega}\Delta_{\mathbb{G}}uZudx+\int_{\Omega}(-\Delta)^suZudx=&\int_{\Omega}g(u)Zudx\\
=&\int_{\Omega}\sum_{i=1}^{n_1}ix_i\frac{\partial}{\partial x_i}G(u)dx
=-\frac{Q}{2}\int_{\Omega}G(u)dx.
\end{split}
\end{align}
Combining \eqref{pohozaev_identity1} with  Theorem \ref{identity1} and  Louidice  \cite[Equation (4.15) in Theorem $4.1$]{louidice}, the identity can be stated as follows:
\begin{align}\label{pohozaev_identity_2}
\begin{split}
\frac{Q}{2}\int_{\Omega}G(u)dx-\left(\frac{Q-2}{2}\right)\int_{\Omega}|\nabla_{_{\mathbb{H}^n}}u|^2dx&-\left(\frac{Q-2s}{2}\right)\int_{\Omega}|(-\Delta)^su|^2dx\\
=&\frac{1}{2}\int_{\partial\Omega}|\nabla_{_{\mathbb{H}^n}}u|^2\langle Z,\hat{n}\rangle dS.
\end{split}
\end{align}
This completes the proof of Theorem \ref{th2}. 
\end{proof}
\begin{remark}\label{pohozaev_example}
Using Theorem \ref{th2}, one can show that problem \eqref{Brezis-Nirenberg} when defined on a star-shaped domain $\Omega,$ has no nontrivial solutions for any $\lambda<0$. This is because equation \eqref{pohozaev_example} yields
\begin{align}\label{counter_example}
\begin{split}
0<\frac{1}{2}\int_{\partial\Omega}|\nabla_{_{\mathbb{H}^n}}u|^2\langle Z,\hat{n}\rangle dS=&\frac{Q}{2}\int_{\Omega}G(u)dx-\frac{Q}{2}\int_{\Omega}g(u)udx+\left(\int_{\Omega}|\nabla_{_{\mathbb{H}^n}}u|^2dx+s\int_{\Omega}|(-\Delta)^su|^2dx\right)\\
\leq& \frac{Q}{2}\int_{\Omega}G(u)dx-\frac{Q}{2}\int_{\Omega}g(u)udx+\left(\int_{\Omega}|\nabla_{_{\mathbb{H}^n}}u|^2dx+\int_{\Omega}|(-\Delta)^su|^2dx\right)\\
=& \frac{Q}{2}\int_{\Omega}G(u)dx-\frac{Q}{2}\int_{\Omega}g(u)udx+\int_{\Omega}g(u)udx\\
=& \frac{Q}{2}\int_{\Omega}G(u)dx+\left(1-\frac{Q}{2}\right)\int_{\Omega}g(u)udx< 0.
\end{split}
\end{align}
This is absurd and hence there exists no nontrivial solution to the problem if $\lambda<0$ and the domain $\Omega$ is star-shaped.
\end{remark}

\subsection*{Acknowledgements}
The first author was supported by the National Board for Higher Mathematics (NBHM), Department of Atomic Energy (DAE) India, [02011/47/2021/NBHM(R.P.)/R\&D II/2615]. The third author was supported by the Slovenian Research and Innovation Agency program P1-0292 and grants J1-4031, J1-4001, N1-0278, N1-0114, and N1-0083.
We thank the referees for their comments and suggestions.


\begin{thebibliography}{99}	
	
\bibitem{AC}  Alves, C.O.,   Covei, D.P.,  Existence of solutions for a class of nonlocal elliptic problem via subsupersolution, Nonlinear Anal. Real World Appl. 23, 1--8, 2015.	

\bibitem{radu1} Arora, R., R\u{a}dulescu, V.D., Combined effects in mixed local-nonlocal stationary problems,  
Proc. Royal Soc. Edinburgh Sect. A: Math., 1--47, 2024. 
\url{https://doi.org/10.1017/prm.2023.80}

\bibitem{4} Astarita, G., Marrucci, G., Principles of Non-Newtonian Fluid Mechanics
McGraw-Hill, New York, 1974.
	
\bibitem{radu2} Bahrouni, A., R\u{a}dulescu, V.D., Repov\v{s}, D.D., Nonvariational and singular double phase problems for the Baouendi-Grushin operator, J. Diff. Equ.,  303(5), 645--666, 2021.

\bibitem{AB3} Bonfiglioli, A., Lanconelli, E., Uguzzoni, F., Stratified Lie groups and potential theory for their sub-laplacians, Springer-Verlag, Berlin, Heidelberg, 2007.

\bibitem{bouabdallah} Bouabdallah, M., Chakrone, O., Chehabi, M., Zuo, J., Solvability of a nonlocal fractional $p$-Kirchhoff type problem, Rend. Circ. Mat. Palermo (2), 72(8), 3971--3985, 2023.

\bibitem{BSM1}  Buccheri, S., da Silva, J.V., de Miranda, L.H., A system of local/nonlocal $p$-Laplacians: The eigenvalue problem and its asymptotic limit as $p\to\infty$, Asymptot. Anal., 128(2), 149--181, 2022.

\bibitem{cassini} Cassani, D., Du, L., Fine bounds for best constants of fractional subcritical Sobolev embeddings and applications to nonlocal PDEs, Adv. Nonlinear Anal., 12 (1), Art ID:20230103, 27 pp, 2023.

\bibitem{CR1} Choudhuri, D., Repov\v{s}, D.D., On semilinear equations with free boundary conditions on stratified Lie groups, J. Math. Anal. Appl., 518(1), Art ID: 126677, 17 pp., 2023.

\bibitem{chou1} Choudhuri, D., Tavares, L.S., L\'{o}pez, J.A.A., A study of a critical hypoelliptic problem in a stratified Lie group, Complex Var. Ell. Equ.,  18 pp., 2024. \url{https://doi.org/10.1080/17476933.2024.2310217}

\bibitem{CFL}  Corr\^{e}a, F.J.S.A.,   Figueiredo, G.M.,   Lopes, F.P.M., On the
existence of positive solutions for a nonlocal elliptic problem involving
the p-Laplacian and the generalized Lebesgue space $L^{p(x)}(\Omega)$,
Differ. Integral Equ. 21(3-4), 305-324, 2008.
	
\bibitem{SD-EV-0} Dipierro, S., Valdinoci, E., Some perspectives on (non)local phase transitions and minimal surfaces, Bull. Math. Sci., 13(1), Art ID: 2330001, 77 pp., 2023.
	
\bibitem{ercole1} Ercole, G., Solving an abstract nonlinear eigenvalue problem by the inverse iteration method, Bull. Braz. Math. Soc., 49(3), 577--591, 2018.
	
\bibitem{GBF1} Folland, G.B., Stein, E.M., Hardy Spaces on Homogeneous Groups, Mathematical Notes, 28, Princeton University Press, Princeton, NJ, 1982.

\bibitem{PG0} Garain, P., Kinnunen, J., On the regularity theory for mixed local and nonlocal quasilinear elliptic equations, Trans. Amer. Math. Soc., 375(8), 5393--5423, 2022.

\bibitem{PG2} Garain, P., Pchelintsev, V., Ukhlov, A., On the Neumann $(p,q)$-eigenvalue problem in H\"{o}lder singular domains, arXiv:2301.11037. \url{https://doi.org/10.48550/arXiv.2301.11037}

\bibitem{PG1} Garain, P., Ukhlov, A., Mixed local and nonlocal Dirichlet $(p,q)$-eigenvalue problem, J. Math. Sci., 270(6), 782--792, 2023.

\bibitem{GVM1} Ghosh, S., Kumar, V., Ruzhansky, M., Compact embeddings, eigenvalue problems, and subelliptic Brezis-Nirenberg equations involving singularity on stratified Lie groups, Math. Ann. 388, 4201--4249 (2024)

\bibitem{radu3}  Gou, T., R\u{a}dulescu, V.D., Non-autonomous double phase eigenvalue problems with indefinite weight and lack of compactness, Bull. London Math. Soc., 56(2), 734--755 (2024).

\bibitem{HK} Haj{\l}asz, P., Koskela, P., Sobolev met Poincar\'{e}, Mem. Amer. Math. Soc., 145(688), 101 p, 2000.

\bibitem{lady0} Ladyzhenskaya, L., Ural'tseva, N.N., Linear and Quasilinear Elliptic Equations, Academic Press, New York, 1968.

\bibitem{Le} L\^{e}, A., Eigenvalue problems for the $p$-Laplacian, Nonlinear Anal., 64(5), 1057--1099, 2006.

\bibitem{peter1} Lindqvist, P., Addendum to `On the equation $\text{div}(|\nabla u|^{p-2}\nabla u)+\lambda|u|^{p-2}u=0$', Proc. Amer. Math. Soc., 116(2), 583--584, 1992.

\bibitem{louidice} Louidice, A., Critical problems with Hardy potential on Stratified Lie Groups, Adv. Differential Equ.,  28(1--2), 1--33, 2023.

\bibitem{bisci} Molica Bisci, G., Repov\v{s}, D.D., Gradient-type systems on unbounded domains of the
Heisenberg group, J. Geom. Anal., 30(2), 1724--1754, 2020.

\bibitem{montgomery1} Montgomery, R., A Tour of Sub-Riemannian Geometries, Their Geodesics and Applications, Mathematical Surveys and Monographs, 91, Amer. Math. Soc., Providence, RI, 2002.

\bibitem{PRR} Papageorgiou, N.S., R\u{a}dulescu, V.D.,  Repov\v{s}, D.D., Nonlinear Analysis - Theory and Applications, Springer, Cham, 2019.

\bibitem{rabinowitz}  Rabinowitz, P.H., Some global results for nonlinear eigenvalue problems, J. Funct. Anal., 7, 487-513, 1971.

\bibitem{razani} Razani, A., Behboudi,  F., Weak solutions for some fractional singular $(p,q)$-Laplacian nonlocal problems with Hardy potential, Rend. Circ. Mat. Palermo (2), 72 (3), 1639--1654, 2023.

\bibitem{saoudi1} Saoudi, K., Ghosh, S., Choudhuri, D., Multiplicity and H\"{o}lder regularity of solutions  for a nonlocal elliptic PDE involving singularity, J. Math. Phys., 60, Art ID: 101509, 28 pp., 2019.

\bibitem{32} Uhlenbeck, K., Regularity for a class of non-linear elliptic systems, Acta Math., 138(3--4), 219--240,  1977.

\bibitem{Z} Zeddini, N., Saeed Sari, R., Existence of positive continuous weak solutions for some semilinear elliptic eigenvalue problems, Opuscula Math., 42 (3), 489--519, 2022.

\bibitem{radu4} Zhang, Q., R\u{a}dulescu, V.D., Double phase anisotropic variational problems and combined effects of reaction and absorption terms, J.  Math. Pures et Appliqu\'{e}es (9), 118, 159--203, 2018.

\bibitem{zhao2} Zhao, L., Cai, H., Chen, Y., Multiple nontrivial solutions of superlinear fractional Laplace equations without (AR) condition, Adv. Nonlinear Anal., 12 (1), Art ID: 20220281, 14 pp, 2023.

\end{thebibliography}
\end{document}